\begin{document}

\author{Mohit Bansil}
\address{Department of Mathematics, Michigan State University}
\email{bansilmo@msu.edu}

\author{Jun Kitagawa}
\address{Department of Mathematics, Michigan State University}
\email{kitagawa@math.msu.edu}

\title[Newton for OT with storage fees]{A Newton algorithm for semi-discrete optimal transport with storage fees}
\subjclass[2020]{49Q22, 49M15, 49M25, 65K10}
\thanks{JK's research was supported in part by National Science Foundation grant DMS-1700094.}

\begin{abstract}
We introduce and prove convergence of a damped Newton algorithm to approximate solutions of the semi-discrete optimal transport problem with storage fees, corresponding to a problem with hard capacity constraints. This is a variant of the optimal transport problem arising in queue penalization problems, and has applications to data clustering. Our result is novel as it is the first numerical method with proven convergence for this variant problem; additionally the algorithm applies to the classical semi-discrete optimal transport problem but does not require any connectedness assumptions on the support of the source measure, in contrast with existing results. Furthermore we find some stability results of the associated Laguerre cells. All of our results come with quantitative rates. We also present some numerical examples.
\end{abstract}

\maketitle


\section{Introduction}

\subsection{Semi-discrete optimal transport with storage fees}

In this paper we deal with the following problem. Let $X\subset \R^n$, $n\geq 2$ be compact and $Y:=\{y_i\}_{i=1}^N \subset \R^n$ a fixed collection of finite points, along with a \emph{cost function} $c: X\times Y\to \R$ and a \emph{storage fee function} $F: \R^N\to\R$. We also fix a Borel probability measure $\mu$ with $\spt \mu\subset X$, and assume $\mu$ is absolutely continuous with respect to Lebesgue measure. The \emph{semi-discrete optimal transport with storage fees} is then find a pair $(T, \weightvect)$ with $\weightvect=(\weightvect^1,\ldots, \weightvect^N)\in \R^N$ and $T: X\to Y$ measureable satisfying
$
	T_\#\mu = \sum_{i=1}^N \weightvect^i \delta_{y_i}
$, 
such that
\begin{align}\label{eqn: monge ver}
	\int_X c(x, T(x)) d\mu + F(\weightvect) = \min_{\tilde \weightvect\in \R^N,\ \tilde{T}_\#\mu = \sum_{i=1}^N \tilde\weightvect^i \delta_{y_i}} \int_X c(x, \tilde{T}(x)) d\mu + F(\tilde\weightvect).
\end{align}
In \cite{BansilKitagawa19a} the authors have shown under appropriate conditions, existence of solutions to the problem with storage fees, along with a dual problem with strong duality, and a characterization of dual maximizers and primal minimizers. It is not difficult to see that an optimal $T$ can be constructed via an $\mu$-a.e. partition of the domain $X$ which is induced by a maximizing dual potential, the cells forming such a partition are known as \emph{Laguerre cells} (see Definition \ref{def: lag cell}). 

To contrast, the classical (semi-discrete) optimal transport problem would be to fix a discrete probability measure $\nu$ supported on $Y$, and to find a measurable mapping $T: X\to Y$ such that $T_\#\mu(E):=\mu(T^{-1}(E))=\nu(E)$ for any measurable $E\subset Y$, and $T$ satisfies
\begin{align}\label{eqn: monge}
	\int_X c(x, T(x)) d\mu = \min_{\ti T_\#\mu=\nu} \int_X c(x, \ti T(x)) d\mu,
\end{align}
and it is easy to see the classical problem is a special case of the problem with storage fee above (see paragraph below).

In this paper, we propose and show convergence of a damped Newton algorithm, when the storage fee function is of the form
\begin{align}
	F(\weightvect)=F_{w}(\weightvect):=
	\begin{cases}
		0, &\weightvect\in \prod_{i=1}^N[0, w^i],\\
		+\infty,&\text{else},
	\end{cases}
\end{align}
where $w=(w^1,\ldots, w^N)\in \R^N$ is some fixed vector with nonnegative components. The minimization \eqref{eqn: monge ver} with this choice of $F$ corresponds to a problem where the $i$th target point has a hard capacity constraint given by $w^i$, with no other associated cost of storage. It is clear that if $w$ satisfies $\sum_{i=1}^N w^i=1$, the solution of the problem with storage fee solves the classical optimal transport problem with target measure $\nu=\sum_{i=1}^N w^i\delta_{y_i}$, hence this variant includes the classical case. 

\subsection{Contributions of the paper}
The major novelties of our algorithm above are mainly the following three aspects. First, this is the first algorithm available for problems with storage fee. Second, our method applies to classical optimal transport where the source measure does not satisfy a Poincar{\'e}-Wirtinger inequality, which is a crucial condition in existing results such as \cite{KitagawaMerigotThibert19}. Third, we give explicit errors on  the geometric structures arising in the approximations generated by our algorithm.

We introduce some preliminary notions in Section \ref{section: setup} below to state our damped Newton algorithm, as such we defer the precise statements of our main theorems to Section \ref{section: setup}, along with the outline for the remainder of the paper.  In Theorem \ref{thm: linear convergence}, we show the above mentioned damped Newton algorithm has global linear convergence, and local superlinear convergence. This result is a significant improvement over \cite{KitagawaMerigotThibert19} by the second author, in that the algorithm applies to the wider class of problems with storage fees, but also because the convergence of the algorithm is shown without a connectedness assumption on the support of the source measure (see Remark \ref{rmk: PW} below). It should also be noted that the convergence proof is not a straightforward application of the analysis in \cite{KitagawaMerigotThibert19}. In Theorems \ref{thm: symmetric convergence} and \ref{thm: hausdorff convergence}, we utilize the results of \cite{BansilKitagawa20a} to show explicit convergence rates for the Laguerre cells. We show convergence in terms of both the $\mu$ symmetric difference and Hausdorff distance.

\subsection{Literature analysis}
The optimal transport with storage fees first appears in \cite{CrippaJimenezPratelli09} in the context of queue penalization. The problem also corresponds to the ``lower level problem'' in the bilevel location problem, see \cite{MallozziPassarelli17}. This interpretation is also related to a problem of monopolistic pricing analyzed in \cite{CarlierMallozzi18}. These are a few of the potential applications of the optimal transport problem with storage fees (see also Remark \ref{rmk: data} below), and we emphasize that this paper provides the first numerical method for this problem.

For the classical optimal transport problem (see \cite{PeyreCuturi19} for an excellent overview) there are now many numerical methods. We briefly mention three popular approaches: entropic regularization, discretization schemes for solving the Monge-Amp{\`e}re equation with second boundary value condition, or approximation of a semi-discrete problem.

Entropic regularization is accomplished by adding the \emph{relative (Shannon) entropy} with respect to the tensor product of the source and target measures to the objective functional in the measure valued (Kantorovich) problem, to act as a regularizing term. Numerically, the problem can be solved using the Sinkhorn algorithm (first done for optimal transport by Cuturi, \cite{Cuturi13}). This is generally a fast and parallelizable method: when transporting between two discrete measures supported on $N$ points each, the Sinkhorn algorithm finds approximations with total transport cost within $\epsilon$ of the true value in $O(\frac{N^2\log N}{\epsilon^3})$ operations (see \cite{AltschulerWeedRigollet17}). However, the entropic regularization method has the disadvantage that solutions of the regularized problem are only known to converge in a weak sense to the true solution (weak convergence of measures, see \cite{CarlierDuvalPeyreSchmitzer17}), with no explicit convergence rates.

 For absolutely continuous source and target measures, the solution of the optimal transport problem can be constructed by solving a PDE of Monge-Amp{\`e}re type with the \emph{second boundary value} condition. Finite difference schemes for the Monge-Amp{\`e}re operator with these boundary conditions have been investigated by Benamou, Froese, and Oberman; in \cite{Oberman08, FroeseOberman11a, FroeseOberman11b, BenamouFroeseOberman14} they show various schemes are monotone, stable, and consistent, hence approximations converge uniformly to a viscosity solution of the PDE, (via Barles and Souganidis: \cite{BarlesSouganidis91}). This approach applies to problems with absolutely continuous measures, and some of the schemes mentioned are robust for singular solutions of the PDE.  However, no explicit convergence rates are available for these methods in the optimal transport case (\cite{NochettoZhang2019} gives quantitative rates for the Dirichlet problem assuming higher regularity of solutions). Also, stencils need to be modified near the boundary for these schemes, which is difficult for complicated geometries. Lastly, convexity of solutions is essential, hence these schemes are limited to the ``classical'' Monge-Amp{\`e}re case, $c(x, y)=\norm{x-y}^2$.

The method we use is based on the duality theory for semi-discrete transport problems. By Brenier's theorem (\cite{Brenier91}), solutions of the semi-discrete optimal transport problem can be constructed from a finite envelope of a certain family of functions depending on the cost (when $c(x, y)=\norm{x-y}^2$, the family is affine functions). For the classical Monge-Amp{\`e}re equation this construction goes back to Aleksandrov and Pogorelov, \cite{Aleksandrov05}. The papers \cite{CaffarelliKochenginOliker99, Kitagawa14, AbedinGutierrez17} propose a non-Newton type iterative method, the last result is applicable to \emph{generated Jacobian equations}; a class more general than optimal transport. These results give an upper bound on the number of iterations necessary, but are slower with a bound of $O(\frac{N^4}{\epsilon})$ steps for an error $\epsilon>0$ with target measure supported on $N$ points.

The first use of a Newton method with this  envelope construction appears to be \cite{OlikerPrussner88} for a semi-discrete Monge-Amp{\`e}re equation with Dirichlet boundary condition; there local convergence is proved,  global convergence was later shown in \cite{Mirebeau15}; their setting is for weak solutions of \emph{Aleksandrov} type, generally different from optimal transport solutions.
For the classical optimal transport problem, \cite{AurenhammerHoffmannAronov98} observed that finding the optimal map is equivalent to  extremizing the so-called Kantorovich functional;  \cite{Merigot11} observed good empirical behavior of Newton type methods for this problem (but without convergence proofs).  A damped Newton method is used for the quadratic cost on the torus in \cite{LoeperRapetti05, SaumierAguehKhouider15} with proofs of convergence based on regularity theory of the Monge-Amp{\`e}re equation due to Caffarelli (\cite{Caffarelli92}). In \cite{KitagawaMerigotThibert19}, a damped Newton algorithm is proposed that applies to a wider class of cost functions and global linear and local superlinear convergence for H\"older continuous source measures is proved. A key assumption is that the source measure satisfy a Poincar{\'e}-Wirtinger inequality, a quantitative connectivity assumption on the support (see also \cite{MerigotMeyronThibert18}). Advantages of the semi-discrete approach is that it produces exact solutions to some transport problem, and some methods can be applied to a wide variety of cost functions other than the quadratic distance cost.

\begin{rmk}[Data clustering]\label{rmk: data}
One application of the problem we consider here is to data clustering. Suppose there is some data set that is so large, it can be viewed as being distributed according to an absolutely continuous measure $\mu$. The goal is then to partition the data into $N$ clusters, where for each cluster a ``representative element'' $y_i$ is given. It is natural to utilize semi-discrete optimal transport in data clustering where the affinity of the data considered is measured by the cost function $c$, unlike when using classical optimal transport, by adding a storage fee function one does not have to \emph{a priori} specify the sizes of each cluster. In particular, a storage fee of the form $F_w$ will yield such a clustering, with the hard constraint that the $i$th cluster can be no larger than $w^i$. In this context, each Laguerre cell is a cluster, and it is useful to analyze convergence of these cells in any approximation. 
\end{rmk}

\subsection{Strategy of proof and obstacles}

There are a number of difficulties that prevent a direct translation of the damped Newton algorithm from \cite{KitagawaMerigotThibert19} to the problem with storage fees. First, in the classical case one fixes a discrete target measure $\nu=\sum_{i=1}^N \weightvect^i\delta_{y_i}$, and the Newton algorithm is used to approximate the weight vector $\weightvect=(\weightvect^1,\ldots, \weightvect^N)$. However, in our problem with storage fees, the weight vector $\weightvect$ itself must be chosen as part of the minimization and hence is not fixed, thus it is not even \emph{a priori} clear what quantity to approximate with a Newton algorithm. Additionally, unlike the classical problem, it is possible that $\weightvect^i=0$ for one or more of the entries in an optimal choice for the weight vector, but the algorithm from \cite{KitagawaMerigotThibert19} uses the assumption that all $\weightvect^i$ have strictly positive lower bounds in a crucial way to obtain the convergence. To remedy these issues, we will first approximate the storage function $F_w$: we will use the characterization for solutions found in \cite{BansilKitagawa19a} to find approximating storage functions $\ti F_w$, along with minimizers of the problem \eqref{eqn: monge ver} with $F=\ti F_w$. However, a second difficulty arises as the functions of the form $F_w$ have both highly singular behavior in their subdifferentials at the boundary of their effective domains, while being nonstrictly convex everywhere. Thus, we will further replace functions of this form with uniformly convex, smooth approximations. This procedure turns out to have a regularizing effect on the problem, which allows us to obtain convergence without the aforementioned connectedness assumption as in \cite{KitagawaMerigotThibert19} (see also Remark \ref{rmk: first order cond}).

Concerning the proof of the convergence of the Laguerre cells, we first prove a lemma on the strong convexity of a functional associated to the semi-discrete optimal transport problem. This lemma is then used to control the difference of the Laguerre cells of the problems associated to $\ti F_w$ and those of our uniformly convex, smooth approximations. From here we are able to apply the results of \cite{BansilKitagawa20a} to obtain the desired convergence.

\section{Setup}\label{section: setup}
\subsection{Notation and conventions}
Here we gather notation and conventions to be used in the remainder of the paper. As mentioned above, we fix positive integers $N$ and $n$ and a collection $Y:=\{y_i\}_{i=1}^N\subset \R^n$. The standard $N$-simplex will be denoted
\begin{align*}
	\weightvectset:=\{{\weightvect}\in \R^N\mid \sum_{i=1}^N\weightvect^i=1,\ \weightvect^i\geq 0\},
\end{align*}
and to any vector ${\weightvect}\in \weightvectset$ we associate the discrete measure $\displaystyle\nu_{{\weightvect}}:=\sum_{i=1}^N\weightvect^i\delta_{y_i}$. The notation $\onevect$ will refer to the vector in $\R^N$ whose components are all $1$. We also reserve the notation $\norm{V}:=\sqrt{\sum_{i=1}^N \abs{V^i}^2}$ for the Euclidean ($\ell^2$) norm of a vector $V\in \R^N$, while $\norm{V}_1:=\sum_{i=1}^N \abs{V^i}$ and $\norm{V}_\infty:=\max_{i\in\{1,\ldots, N\}}\abs{V^i}$ will respectively stand for the $\ell^1$ and $\ell^\infty$ norms. We also write $\norm{M}$ for the operator norm of a matrix $M$, the distinction from the Euclidean norm of a vector should be clear from context.

Given any set $A$, we write $\ind(x\mid A):=
\begin{cases}
0,&x\in A,\\
+\infty,&x\not\in A,
\end{cases}$ for the \emph{indicator function} of the set $A$, and for any vector $w\in\R^N$ with nonnegative entries, we denote $F_w:=\sum_{i=1}^N\ind(\cdot\mid [0, w^i])=\ind(\cdot\mid \prod_{i=1}^N[0, w^i])$. We will also use $\L$ to denote the $n$-dimensional Lebesgue measure and $\mathcal{H}^k$ for the $k$-dimensional Hausdorff measure.

Regarding the cost function $c$, we will generally assume the following standard conditions from optimal transport theory:
\begin{align}
	c(\cdot, y_i)&\in C^2(X), \forall i\in \{1, \ldots, N\},\label{Reg}\tag{Reg}\\
	\nabla_xc(x, y_i)&\neq \nabla_xc(x, y_k),\ \forall x\in X,\ i\neq k.\label{Twist}\tag{Twist}
\end{align}
We also assume the following condition, originally studied by Loeper in \cite{Loeper09}.
\begin{defin}
	We say $c$ satisfies \emph{Loeper's condition} if for each $i\in \{1, \ldots, N\}$ there exists a convex set $X_i\subset \R^n$ and a $C^2$ diffeomorphism $\cExp{i}{\cdot}: X_i\to X$ such that 
	\begin{align*}
		\forall\ t\in\R,\ 1\leq k, i\leq N,\ \{p\in X_i\mid -c(\cExp{i}{p}, y_k)+c(\cExp{i}{p}, y_i)\leq t\}\text{ is convex}.\label{QC}\tag{QC}
	\end{align*}
	See Remark \ref{rmk: brenier solutions} below for a discussion of these conditions.
	
	We also say that a set $\tilde{X}\subset X$ is \emph{$c$-convex} with respect to $Y$ if $\invcExp{i}{\tilde{X}}$ is a convex set for every $i\in \{1, \ldots, N\}$. 
\end{defin}
It will be convenient to also introduce $c$-convex functions and the $c$ and $c^*$-transforms. In the semi-discrete case the $c^*$-transform of a function defined on $X$ will be a vector in $\R^N$, while the $c$-transform of a vector in $\R^N$ will be a function whose domain is $X$.
\begin{defin}\label{def: c-transforms}
	If $\varphi: X\to \R\cup \{+\infty\}$  ($\varphi\not\equiv+\infty$) and $\psi\in \R^N$, their $c$- and $c^*$-transforms are a vector $\varphi^c\in \R^N$ and a function $\psi^{c^*}: X\to \R\cup \{+\infty\}$ respectively, defined by
	\begin{align*}
		(\varphi^c)^i:=\sup_{x\in X}(-c(x, y_i)-\varphi(x)),\quad (\psi^{c^*})(x):=\max_{i\in \{1, \ldots, N\}}(-c(x, y_i)-\psi^i).
	\end{align*}
	If $\varphi: X\to \R\cup \{+\infty\}$ is the $c^*$-transform of some vector in $\R^N$, we say \emph{$\varphi$ is a $c$-convex function}. %
	A pair $(\varphi, \psi)$ with $\varphi: X\to \R\cup \{+\infty\}$ and $\psi\in \R^N$ is a \emph{$c$-conjugate pair} if $\varphi=\psi^{c^*}$ and $\psi=\psi^{c^*c}$.
\end{defin}
\begin{defin}\label{def: lag cell}
	For any $\psi\in \R^N$ and $i\in \{1, \ldots, N\}$, we define the \emph{$i$th Laguerre cell} associated to $\psi$ as the set
	\begin{align*}
		\Lag_i(\psi):=\{x\in X\mid -c(x, y_i)-\psi^i=\psi^{c^*}(x)\}.
	\end{align*}
	We also define the function $G: \R^N\to \weightvectset$ and the set $\mathcal{K}^\epsilon$ for any $\epsilon\geq 0$ by,
	\begin{align*}
		G(\psi):&=(G^1(\psi), \ldots, G^N(\psi))=(\mu(\Lag_1(\psi)), \ldots, \mu(\Lag_N(\psi))),\\ \mathcal{K}^\epsilon:&=\{\psi\in \R^N\mid G^i(\psi)> \epsilon,\ \forall i\in \{1, \ldots, N\}\}.
	\end{align*}
\end{defin}
\begin{rmk}\label{rmk: brenier solutions}
	The above conditions \eqref{Reg}, \eqref{Twist}, \eqref{QC} are the same ones assumed in \cite{KitagawaMerigotThibert19}. As also mentioned there, \eqref{Reg} and \eqref{Twist} are standard in the existence theory of optimal transport, while \eqref{QC} holds if $Y$ is a finite set sampled from from a continuous space, and $c$ is a $C^4$ cost function satisfying what is known as the \emph{Ma-Trudinger-Wang} condition (along with an additional convexity assumption on the domain of $c$, which we do not detail here). The strong Ma-Trudinger-Wang condition was first introduced in \cite{MaTrudingerWang05}, and in \cite{TrudingerWang09} in a weaker form. The condition is also \emph{necessary} for the regularity theory of the Monge-Amp{\`e}re type equation arising in optimal transport, see \cite{Loeper09}.
	
	If $\mu$ is absolutely continuous with respect to Lebesgue measure, under \eqref{Twist} the Laguerre cells associated to different indices are disjoint up to sets of $\mu$-measure zero. Then by the generalized Brenier's theorem \cite[Theorem 10.28]{Villani09}, for any vector $\psi\in \R^N$ it is known that the $\mu$-a.e. single valued map $T_\psi: X\to Y$ defined by $T_\psi(x)=y_i$ whenever $x\in \Lag_i(\psi)$, is a minimizer in the classical optimal transport problem \eqref{eqn: monge}, where the source measure is $\mu$ and the target measure is defined by $\nu=\nu_{G(\psi)}$.
\end{rmk}
In order to introduce the damped Newton algorithm we will analyze for our problem \eqref{eqn: monge ver}, we must introduce a few more pieces of notation. The motivation for these will be explained in detail in the following section.

\begin{defin}
	For $h> 0$ and $\thresh\geq 0$ define $g: \R\to \R$ and $w_{h, \thresh}: \R^N\to \R^N$ by
	\begin{align*}
		g(t) :&= 2\( 1+t^2 - t\sqrt{1+t^2} \),\quad
		w^i_{h, \thresh}(\psi):=(G^i(\psi)-\thresh)g\(\frac{\psi^i}{h}\).
	\end{align*}
	Also, we write for any $\epsilon_0>0$ and $w\in \R^N$ with nonnegative entries,
\begin{align*}
 \mathcal{W}^{\epsilon_0}:=\{\psi\in \R^N\mid w^i_{h, \thresh}(\psi)\geq \epsilon_0,\ \forall i\in \{1, \ldots, N\}\},\quad \normset{w, h, \thresh}:=\{\psi\in \mathcal{K}^\thresh\mid \sum_{i=1}^N w^i=\sum_{i=1}^N w_{h, \thresh}^i(\psi)\}.
\end{align*}
\end{defin}
We now use the above notation to propose the following damped Newton algorithm to approximate solutions of \eqref{eqn: monge ver}. Note below, we do not lose any generality in assuming $w^i\leq 1$ for each $i$, as $\mu$ is a probability measure.

\RestyleAlgo{boxruled}
\begin{algorithm}[H]
	\begin{description}
		\item[Parameters] Fix $h$, $\thresh>0$, and $w \in \R^N$ such that $\sum_{i=1}^N w^i \geq 1$, $w^i\in [0, 1]$.
		\item[Input] A tolerance $\zeta > 0$ and an initial $\psi_0\in
		\R^N$ such that
		\begin{equation}\label{eq:nonzerocells}
		\eps_0 :=  \frac{1}{2} 
		\min\left[\min_{i} w_{h, \thresh}^i(\psi_0),~ \min_{i} w^i\right] >  0.
		\end{equation}
		\item[While] $\norm{w_{h, \thresh}(\psi_k) - w}  \geq \zeta$
		\begin{description}
			\item[Step 1] Compute $\vec{d}_k = - [D w_{h, \thresh}(\psi_k)]^{-1} (w_{h, \thresh}(\psi_k) - w)$
			\item[Step 2] For each $\ell\in \N$ let $r_\ell\in \R$ be such that $\psi_{k+1, \ell} :=
			\psi_k + 2^{-\ell} \vec{d}_k+r_\ell\onevect$ satisfies $\psi_{k+1, \ell}\in \normset{w, h, \thresh}$.
			
			\item [Step 3] Determine the minimum $\ell \in \N$ such that $\psi_{k+1, \ell}$ satisfies
			\begin{equation*}
				\left\{
				\begin{aligned}
					&\min_i w_{h, \thresh}^i(\psi_{k+1, \ell}) \geq \eps_0 \\
					&\norm{w_{h, \thresh}(\psi_{k+1, \ell}) - w} \leq (1-2^{-(\ell+1)}) \norm{w_{h, \thresh}(\psi_k) - w}
				\end{aligned}
				\right.
			\end{equation*}
			\item [Step 4] Set $\psi_{k+1} = \psi_k + 2^{-\ell}  \vec{d}_k+r_\ell\onevect$ and $k\gets k+1$.
		\end{description}
	\end{description}
	\label{alg: damped newton}
	\caption{Damped Newton's algorithm}
\end{algorithm}
We now give some heuristics on our algorithm. For $h$, $\thresh\geq 0$ fixed, define for any $t_0\geq 0$, the function $ \approxfee_{t_0, h}: \R\to\R$ by
\begin{align}
	\approxfee_{t_0, h}(t) &
	=\begin{cases}
		-h\sqrt{t(t_0-t)} & \text{if } t \in [0,t_0] \\
		+\infty & \text{else}
	\end{cases},
\end{align}
and for any $w\in \R^N$, $w^i\geq 0$, the function $F_{w, h, \thresh}: \R^N\to \R\cup\{+\infty\}$ by
\begin{align}
	F_{w, h, \thresh}(\weightvect) &= \sum_{i=1}^N \approxfee_{w^i, h}(\weightvect^i-\thresh)+\ind(\weightvect\mid \weightvectset)\\
	&=
	\begin{cases}
		\displaystyle-h \sum_{i=1}^N \sqrt{(\weightvect^i- \thresh)(w^i-\weightvect^i + \thresh )}, &\weightvect\in \weightvectset\cap \prod_{i=1}^N[\thresh, w^i+\thresh],\notag\\
		+\infty,&\text{else}.
	\end{cases}
\end{align}
It can be seen that $F_{w, h, \thresh}$ is a uniformly convex approximation to $F_{w}=F_{w, 0, 0}$ when $h$, $\thresh>0$. Detailed calculations will be deferred to Proposition \ref{prop: constructing solutions} in the following section, but if $\psi\in \R^N$ is a vector such that $w_{h, \thresh}(\psi)=w$, using the results of \cite{BansilKitagawa19a} it can be seen for the map $T_{\psi}$ defined as in Remark \ref{rmk: brenier solutions}, the pair $(T_{\psi}, G(\psi))$ is the unique solution to the minimization problem \eqref{eqn: monge ver} with storage fee function given by $F_{w, h, \thresh}$. Thus the algorithm generates a vector $\psi$ and a storage fee function $\tilde F$ approximating the original $F_w$, such that $(T_\psi, G(\psi))$ solves the optimal transport problem with storage fee $\tilde F$. The normalization $\psi\in \normset{w, h, \thresh}$ at each step in Algorithm \ref{alg: damped newton} is necessary in order to ensure that the magnitude of the error vector $w_{h, \thresh}(\psi_k)-w$ will actually go to zero.

The main theorem of our paper is the following on convergence of the above algorithm. Also, see Definition \ref{def: universal} below for the notion of a universal constant.
\begin{thm}\label{thm: linear convergence}
	Suppose $c$ satisfies \eqref{Reg}, \eqref{Twist}, and \eqref{QC}. Also suppose $X$ is a bounded set that is $c$-convex with respect to $Y$, $\mu=\rho dx$ for some density $\rho\in C^{0, \alpha}(X)$ for some $\alpha\in (0, 1]$, and $\spt\mu\subset X$. Then if $h\in (0,  1]$, $\thresh\in (0, \frac{1}{2N})$, and $\sum_{i=1}^N w^i\geq 1$, Algorithm \ref{alg: damped newton} converges globally with linear rate, and locally with superlinear rate $1+\alpha^2$.
	
	Specifically, the iterates of Algorithm \ref{alg: damped newton} satisfy
	\begin{align*}
	\norm{w_{h, \thresh}(\psi_{k+1}) - w} \leq (1 - \conj \tau_k/2 )\norm{w_{h, \thresh}(\psi_{k}) - w}
	\end{align*}
	where
	\begin{align*}
	\conj \tau_k := \min\(\frac{\eps_0^{\frac{1}{\alpha^2}} \kappa^{1+ \frac{1}{\alpha^2}}}{(8L\ti L^{1+\alpha} \sqrt{N})^{\frac{1}{\alpha^2}}  \norm{w_{h, \thresh}(\psi_k)-w} N^{\frac{1}{\alpha^2}}  }, 1\),
	\end{align*}
	where $L$ and $\kappa$ are as in Proposition \ref{prop: Dw estimates}, and $\ti L \leq \frac{C}{h^{18}\thresh^{9}}$ for some universal constant $C$. 
	
	In addition as soon as $\conj \tau_k = 1$ we have
	\begin{align*}
	\norm{w_{h, \thresh}(\psi_{k+1}) - w} \leq \frac{2L \ti L^{1+\alpha}\sqrt{N} \norm{w_{h, \thresh}(\psi_k)-w}^{1+\alpha^2}}{\kappa^{1+\alpha^2}}.
	\end{align*}

\end{thm}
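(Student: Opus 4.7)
The plan is to carry out a damped-Newton convergence analysis in the spirit of \cite{KitagawaMerigotThibert19}, applied however to the nonlinear map $F(\psi) := w_{h, \thresh}(\psi) - w$ rather than to $G(\psi) - \nu$. The two key ingredients will be (i) feasibility invariants that keep each iterate in a set where the Jacobian $Dw_{h, \thresh}$ is well controlled, and (ii) a Taylor expansion exploiting the H\"older regularity of $Dw_{h, \thresh}$ to turn the damped step into a quantitative contraction.

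First I would verify the feasibility invariants of the algorithm. Since $c$-Laguerre cells are invariant under adding a common constant to all coordinates of $\psi$, the scalar map $r \mapsto \sum_i w^i_{h,\thresh}(\psi + r\onevect)$ depends on $r$ only through $g((\psi^i+r)/h)$; the function $g$ is smooth and strictly monotone with range $(1, \infty)$, and since $\sum_i w^i\geq 1 > 1 - N\thresh$ the intermediate value theorem yields an $r_\ell$ in Step 2 realizing $\psi_{k+1,\ell} \in \normset{w,h,\thresh}$. Step 3 then forces $\psi_{k+1}\in\mathcal{W}^{\epsilon_0}$, so that each $w^i_{h,\thresh}(\psi_{k+1})$ (and consequently $G^i(\psi_{k+1})-\thresh$) remains bounded below by a positive quantity. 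Together these invariants keep every iterate in the set on which the estimates of Proposition \ref{prop: Dw estimates} are valid.

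Next I would invoke Proposition \ref{prop: Dw estimates} to supply both (a) an invertibility bound $\norm{[Dw_{h,\thresh}(\psi)]^{-1}} \leq 1/\kappa$ and (b) a H\"older estimate $\norm{Dw_{h,\thresh}(\psi) - Dw_{h,\thresh}(\psi')} \leq L\ti L^{1+\alpha}\norm{\psi-\psi'}^{\alpha^2}$ on that same set. A Taylor expansion along the Newton direction, combined with the defining identity $DF(\psi_k)\vec{d}_k = -F(\psi_k)$ and integration of the H\"older remainder, yields
\begin{align*}
    \norm{F(\psi_k + t\vec{d}_k)} \leq (1-t)\norm{F(\psi_k)} + \frac{L\ti L^{1+\alpha}}{1+\alpha^2} t^{1+\alpha^2} \norm{\vec{d}_k}^{1+\alpha^2}.
\end{align*}
Combined with $\norm{\vec{d}_k} \leq \sqrt{N}\norm{F(\psi_k)}/\kappa$ and the two conditions of Step 3---the Armijo-type decrease $\norm{F(\psi_{k+1,\ell})} \leq (1-2^{-(\ell+1)})\norm{F(\psi_k)}$ and the positivity constraint $\min_i w^i_{h,\thresh}(\psi_{k+1,\ell}) \geq \epsilon_0$---a direct calculation shows Step 3 accepts the iterate as soon as $2^{-\ell}\geq \conj \tau_k$, yielding the global linear contraction factor $1 - \conj\tau_k/2$. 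When $\conj\tau_k = 1$ a full Newton step is accepted, and evaluating the Taylor bound at $t = 1$ produces the claimed superlinear estimate $\norm{F(\psi_{k+1})} \leq \tfrac{2L\ti L^{1+\alpha}\sqrt{N}}{\kappa^{1+\alpha^2}} \norm{F(\psi_k)}^{1+\alpha^2}$.

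The main obstacle is not the Newton iteration itself but establishing the quantitative regularity of $Dw_{h,\thresh}$ with the stated constants. The bound $\ti L \leq C/(h^{18}\thresh^9)$ arises from differentiating the product $(G^i - \thresh)g(\psi^i/h)$: derivatives of $g$ of order $k$ contribute $h^{-k}$ factors, while the H\"older bounds on $G$, propagated through the $C^{0,\alpha}$ regularity of $\rho$, lose reciprocal powers of the lower bound on $G^i - \thresh$ (which is proportional to $\thresh$). The exponent $\alpha^2$ is the composition of the H\"older modulus of $\rho$ with that of the parametrization of the Laguerre-cell facets. Once these ingredients are in place, the Newton convergence claims follow from the Taylor-remainder bookkeeping described above, together with the feasibility setup from the first step.
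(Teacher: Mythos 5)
Your outline correctly identifies the Kitagawa--M\'erigot--Thibert template and correctly locates the feasibility invariants, but it misses the one genuinely new ingredient that the paper introduces, and as a consequence a key step of the argument does not close.

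The Taylor expansion in your sketch is written along the straight line $\psi_k + t\vec{d}_k$:
\begin{align*}
\norm{F(\psi_k + t\vec{d}_k)} \leq (1-t)\norm{F(\psi_k)} + \frac{L\ti L^{1+\alpha}}{1+\alpha^2}\, t^{1+\alpha^2}\norm{\vec{d}_k}^{1+\alpha^2}.
\end{align*}
However, the iterate produced by Step 2 of Algorithm \ref{alg: damped newton} is $\psi_{k+1,\ell} = \psi_k + 2^{-\ell}\vec{d}_k + r_\ell\onevect$, and the map $w_{h,\thresh}$ is \emph{not} invariant under adding multiples of $\onevect$ (only $G$ is; the factor $g(\psi^i/h)$ is not). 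So the quantity you need to control is $w_{h,\thresh}\bigl(\pi(\psi_k + t\vec{d}_k)\bigr)$, where $\pi(\psi) = \psi - r(\psi)\onevect$ is the renormalization onto $\normset{w,h,\thresh}$, and your straight-line expansion gives no information about it. This is exactly why the paper introduces and estimates $\pi$ in Proposition \ref{prop: alg error bounds} and then Taylor-expands along the \emph{curve} $t\mapsto\pi(\psi_k - t\diralg)$. Two non-trivial consequences follow: (i) the first-order term along this curve is not automatically $-(w_{h,\thresh}(\psi_k)-w)$; one must verify that the contribution from $Dr$ vanishes, and this is where the identity
\begin{align*}
\inner{Dr(\psi_k)}{\diralg} = \frac{\inner{\onevect}{\,w_{h,\thresh}(\psi_k)-w\,}}{\inner{Dw_{h,\thresh}(\psi_k)\onevect}{\onevect}} = 0
\end{align*}
(valid precisely because $\psi_k\in\normset{w,h,\thresh}$, so $\onevect$ is orthogonal to the residual) is used essentially. (ii) The constant $\ti L$ is $\norm{D\pi}_{C^{0,\alpha}(\overline{\mathcal K^\thresh};\R^N)}$, not a H\"older constant of $Dw_{h,\thresh}$ as your item (b) asserts; and the exponent $\alpha^2$ arises from the composition of the $\alpha$-H\"older modulus of $Dw_{h,\thresh}$ with the $\alpha$-H\"older modulus of $D\pi$, not from ``the H\"older modulus of $\rho$ composed with the parametrization of the Laguerre-cell facets.'' Without the renormalization map $\pi$ appearing in the expansion, neither the cancellation of the $Dr$-term nor the claimed constants and exponent can be obtained, so the central contraction estimate is not established by your argument as written.
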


\begin{rmk}\label{rmk: first order cond}
In \cite{KitagawaMerigotThibert19}, the goal is to find a root of the mapping $G-\beta$ which is in fact the gradient of the concave dual functional in the Kantorovich problem. In contrast, our mapping $w_{h, \thresh}-w$ is not the gradient of any scalar function (seen easily as $Dw_{h, \thresh}$ is not symmetric). However, there is a connection between the choice of $w_{h, \thresh}$ and the \emph{dual problem} of our optimal transport problem with storage fee. The authors have shown in \cite{BansilKitagawa19a} that a natural dual problem for \eqref{eqn: monge ver} is to maximize 
\begin{align*}
\R^N\ni \psi\mapsto -\int_X \max_{i}(-c(x, y_i)-\psi^i)d\mu(x)-F^*(\psi)
\end{align*}
where $F^*$ is the Legendre transform of $F$. This function is convex, and using \cite[Theorem 1,1]{KitagawaMerigotThibert19}, formally the first order condition for a maximum reads $G(\psi)\in \subdiff{F^*}{\psi}$, or equivalently $\psi\in \subdiff{F}{G(\psi)}$. Under mild conditions, this first order condition actually characterizes optimality, see \cite[Theorem 4.7]{BansilKitagawa19a}. This choice of $w_{h, \thresh}$ is exactly what guarantees that a root of $w_{h, \thresh}-w$ satisfies this first order condition when $F=F_{w, h, \thresh}$ (see Proposition \ref{prop: constructing solutions}).
\end{rmk}

In what follows, it will be possible in theory to obtain the exact dependence of constants on various quantities involving the storage fee function, cost function, domain, and the density of the source measure by tracing these bounds through the results of \cite{KitagawaMerigotThibert19}. However, we are most interested in the dependencies on the parameters $h$, and $\thresh$, thus in the interest of brevity we will introduce the following terminology. The constants below are the same as those introduced in \cite[Remark 4.1]{KitagawaMerigotThibert19}.
\begin{defin}\label{def: universal}
	Suppose $c$ satisfies \eqref{Reg} and \eqref{Twist}, $X$ is a bounded set, $c$-convex with respect to $Y$, $\mu=\rho dx$ for some density $\rho\in C^{0, \alpha}(X)$ for some $\alpha\in (0, 1]$, and $\spt\mu\subset X$. Then we will say that a positive, finite constant is \emph{universal} if it has bounds away from zero and infinity depending only on the following quantities: $\alpha$, $n$, $N$, $\norm{\rho}_{C^{0, \alpha}(X)}$, $\mathcal{H}^{n-1}(\partial X)$, $\max_{i\in \{1,\ldots, N\}}\norm{c(\cdot, y_i)}_{C^2(X)}$, and
	\begin{align*}
		\eps_{\mathrm{tw}}&:=\min_{x\in X} \min_{i, j\in \{1, \ldots, N\}, i \neq j}
		\norm{\nabla_xc(x,y_i) - \nabla_xc(x,y_j)},\\ 
		C_\nabla &:=
		\max_{x\in X, i\in \{1, \ldots, N\}} \norm{\nabla_x c(x,y_i)}\\ 
		C_{\exp}
		&:= \max_{i\in \{1, \ldots, N\}} \max\left\{\norm{\exp_i^c}_{C^{0, 1}(\invcExp{i}{X})},
		\norm{(\exp_i^c)^{-1}}_{C^{0, 1}(X)}\right\},\\
		C_{\mathrm{cond}} &:= \max_{i\in \{1,\ldots, N\}} \max_{p \in \invcExp{i}{X}}
		\mathrm{cond}(D \exp_i^c(p)),\\
		C_{\det} &:= \max_{i\in \{1, \ldots, N\}} \norm{\det(D\exp_i^c)}_{C^{0, 1}(\invcExp{i}{X})},
	\end{align*}
	where $\mathrm{cond}$ is the condition number of a linear transformation.
\end{defin}
\begin{rmk}
Apart from Sections 3 and 4, we have written all estimates to keep as explicit track of $N$ as possible. However, in these two sections doing so is a tedious exercise, in particular it would require careful book-keeping of exactly what norms are being used. We comment that if the collection $\{y_1,\ldots, y_N\}$ is constructed by sampling from a continuous domain $Y$, and $c$ is a cost function on $X\times Y$ satisfying \eqref{Reg}, \eqref{Twist}, and the Ma-Trudinger-Wang condition (along with appropriate convexity conditions on $X$ and $Y$, which we will not detail here), then of the constants introduced in Definition \ref{def: universal}, only $\eps_{\mathrm{tw}}$ will depend on $N$. In particular, if this is the case, the dependencies of all universal constants that arise in the paper (apart from that of $\eps_{\mathrm{tw}}$) can be seen to be polynomial in $N$.
\end{rmk}

Since Algorithm \ref{alg: damped newton} only produces solutions to an approximating problem, we are concerned with how close these solutions might be to the solutions of our original problem. The second and third theorems of our paper show that solutions of \eqref{eqn: monge ver} with the choice $F=F_{\tilde{w}, h, \thresh}$ are in fact close to the solution of the problem with $F_{w}$, if $\tilde{w}$ is close to $w$ and $h, \eps$ are small.

\begin{defin}
	If $A$, $B\subset \R^n$ are Borel sets, the \emph{$\mu$-symmetric distance} between them is
	\begin{align}
	\Delta_\mu(A, B):=\mu(A\Delta B)=\mu((A\setminus B)\cup (B\setminus A)).
	\end{align}
\end{defin}

The following theorem gives quantified closeness for Laguerre cells of the approximating problems to those of the original problem, in terms of the $\mu$-symmetric distance.

\begin{thm}\label{thm: symmetric convergence}
	Suppose $c$ satisfies \eqref{Reg} and \eqref{Twist}, and $\mu$ is absolutely continuous. %
	Also suppose $h>0$, $\thresh\in (0, \frac{1}{2N})$, and ${w}\in \R^N$ with $\sum_{i=1}^N {w}^i\geq 1$, ${w}^i\geq 0$. Then if $\psi_{h, \thresh}\in \mathcal{K}^\thresh$ and $(T, \weightvect)$ is a pair minimizing \eqref{eqn: monge ver} with the storage fee function $F_{w}$,
	\begin{align}\label{eqn: G bound}
	\norm{G(\psi_{h, \thresh}) -\weightvect }_1 \leq 2(N\thresh +  \norm{w_{h, \thresh}(\psi_{h, \thresh}) - {w}}_1 + 2N\sqrt{2C_Lh})
	\end{align}
	and
	\begin{align}\label{eqn: mu diff bound}
	\sum_{i=1}^N \Delta_\mu({\Lag_{i}(\psi_{h, \thresh})}, {T^{-1}(\{y_i\})}) \leq 8N(N\thresh +  \norm{w_{h, \thresh}(\psi_{h, \thresh}) - {w}}_1 + 2N\sqrt{2C_Lh}),
	\end{align}
	where $C_L>0$ is the universal constant from Lemma \ref{lem: strong convex C}.
\end{thm}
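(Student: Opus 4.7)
The strategy I would pursue is to introduce an intermediate weight vector: by Proposition \ref{prop: constructing solutions}, the $F_{w,h,\thresh}$-version of \eqref{eqn: monge ver} admits a unique minimizing weight vector $\weightvect^\star$, realizable as $G(\psi^\star)$ for any $\psi^\star$ satisfying $w_{h,\thresh}(\psi^\star) = w$. A triangle inequality decomposition
\begin{align*}
	\norm{G(\psi_{h,\thresh}) - \weightvect}_1 \leq \norm{G(\psi_{h,\thresh}) - \weightvect^\star}_1 + \norm{\weightvect^\star - \weightvect}_1
\end{align*}
then splits the estimate into a \emph{numerical} part, measuring how far $\psi_{h,\thresh}$ is from being an exact dual solution of the $F_{w,h,\thresh}$-problem, and an \emph{approximation} part, measuring how close the minimizers of the $F_w$- and $F_{w,h,\thresh}$-problems are to one another.

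For the numerical term, the identity $G^i(\psi) = \thresh + w_{h,\thresh}^i(\psi)/g(\psi^i/h)$, valid because $g > 0$ on all of $\R$, couples $G$ and $w_{h,\thresh}$ at any $\psi$; at the exact root $\psi^\star$ this reads $\weightvect^{\star,i} = \thresh + w^i/g(\psi^{\star, i}/h)$. Comparing these two identities, and using the simplex normalizations $\sum_i G^i(\psi) = \sum_i \weightvect^{\star, i} = 1$ to absorb the unknown multipliers $g(\psi^i/h)$, one should obtain a bound of the form $\norm{G(\psi_{h,\thresh}) - \weightvect^\star}_1 \leq C\,\norm{w_{h,\thresh}(\psi_{h,\thresh}) - w}_1$ with an explicit constant. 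For the approximation term, the key tool is the strong convexity Lemma \ref{lem: strong convex C} applied to the primal functional, combined with the observation that $|F_{w,h,\thresh}(\weightvect) - F_w(\weightvect)| \leq hN/2$ on the common feasible region $\prod_i [\thresh, w^i] \cap \weightvectset$, since each square-root factor in the definition of $F_{w,h,\thresh}$ is bounded by $1/2$. Trading strong convexity against this value perturbation, and accounting for the transfer between the feasible sets $\prod_i[0, w^i]$ and $\prod_i[\thresh, w^i + \thresh]$ (for instance via the replacement $\weightvect \mapsto \weightvect + \thresh\onevect$ suitably renormalized to land in $\weightvectset$), should produce the $2N\sqrt{2C_L h}$ and $N\thresh$ contributions seen in the stated bound.

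Combining the two pieces gives \eqref{eqn: G bound}. For the upgrade to the $\mu$-symmetric distance \eqref{eqn: mu diff bound}, I would invoke the stability theory from \cite{BansilKitagawa20a}, which under \eqref{Reg}, \eqref{Twist}, and absolute continuity of $\mu$ controls the sum of $\mu$-symmetric distances between Laguerre cells of two partitions by a universal constant times the $\ell^1$ distance between their mass vectors. The prefactor $8N = 4N \cdot 2$ reflects the $(A \setminus B) \cup (B \setminus A)$ decomposition of symmetric differences together with a sum over all $N$ cell indices.

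The main obstacle I expect is the approximation term: to invoke Lemma \ref{lem: strong convex C} one must construct a legitimate competitor to $\weightvect$ inside the shifted feasible set $\prod_i [\thresh, w^i + \thresh] \cap \weightvectset$ (and symmetrically for $\weightvect^\star$ inside $\prod_i [0, w^i]$) with controlled cost overhead, and this is precisely the step that contributes the $N\thresh$ term. The difficulty is sharpened by the fact that $\weightvect$ may have coordinates at the boundary values $0$ or $w^i$ of its feasible interval, where $F_{w,h,\thresh}$ has infinite slope, so the perturbation must be performed in a way that both respects the simplex constraint and avoids any blowup coming from these boundary layers.
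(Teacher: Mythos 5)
Your decomposition into a numerical term $\norm{G(\psi_{h,\thresh}) - \weightvect^\star}_1$ plus an approximation term $\norm{\weightvect^\star - \weightvect}_1$ is natural, and you correctly identify the relevant tools (strong convexity via Lemma~\ref{lem: strong convex C} and the $\mu$-symmetric and $\ell^1$ stability results from \cite{BansilKitagawa20a}). But the proposed treatment of the numerical term has a genuine gap. Writing $G^i(\psi) - \thresh = w^i_{h,\thresh}(\psi)/g(\psi^i/h)$ at the two dual vectors gives
\begin{align*}
G^i(\psi_{h,\thresh}) - \weightvect^{\star,i} = \frac{w^i_{h,\thresh}(\psi_{h,\thresh}) - w^i}{g(\psi_{h,\thresh}^i/h)} + w^i\Bigl(\frac{1}{g(\psi_{h,\thresh}^i/h)} - \frac{1}{g(\psi^{\star,i}/h)}\Bigr),
\end{align*}
and the first piece is indeed controlled by $\norm{w_{h,\thresh}(\psi_{h,\thresh}) - w}_1$ since $g\geq 1$, but the second piece involves $N$ unknown multipliers evaluated at two \emph{different} dual vectors, and the single scalar relation $\sum_i G^i = \sum_i \weightvect^{\star,i} = 1$ cannot absorb them. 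Controlling this second piece in terms of $\norm{w_{h,\thresh}(\psi_{h,\thresh}) - w}_1$ amounts to an inverse Lipschitz estimate on $w_{h,\thresh}$, and the paper's own bound \eqref{eqn: inverse Dw bound}, $\norm{Dw_{h,\thresh}^{-1}} \lesssim \epsilon_0^{-1}h^{-6}\thresh^{-3/2}$, shows this degenerates badly as $h,\thresh\to 0$, which is incompatible with the clean coefficient $2$ in \eqref{eqn: G bound}.

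The paper sidesteps this entirely by choosing a different intermediate vector. Set $\conj w := w_{h,\thresh}(\psi_{h,\thresh})$; by Proposition~\ref{prop: constructing solutions}, $\weightvect_{h,\thresh} := G(\psi_{h,\thresh})$ is the \emph{exact} minimizer for the storage fee $F_{\conj w, h, \thresh}$ (not an approximate minimizer for $F_{w,h,\thresh}$). One then compares $\weightvect_{h,\thresh}$ to the minimizer $\weightvect_{\conj w, \thresh}$ of the hard-constraint problem $F_{\conj w, 0, \thresh}$ — these live on the \emph{same} box $\prod_i[\thresh, \conj w^i + \thresh]$, so strong convexity of $\mathcal{C}$ (Corollary~\ref{unicon}) plus the trivial bound $-F_{\conj w,h,\thresh}\leq h$ gives $\norm{\weightvect_{h,\thresh} - \weightvect_{\conj w,\thresh}}_1 \lesssim N\sqrt{C_L h}$ directly, with no competitor construction needed. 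Finally the box-perturbation stability result \cite[Theorem~2.6]{BansilKitagawa20a} compares $\weightvect_{\conj w,\thresh}$ to $\weightvect$ and absorbs \emph{both} the $\thresh$-shift and the numerical error in one shot: $\norm{\weightvect_{\conj w,\thresh} - \weightvect}_1 \leq 2N\thresh + 2\norm{\conj w - w}_1$. Thus the numerical error never requires inverting $w_{h,\thresh}$ — it enters only through the location of the box. The $\mu$-symmetric upgrade \eqref{eqn: mu diff bound} is then \cite[Corollary~2.7]{BansilKitagawa20a} applied to \eqref{eqn: G bound}, as you anticipated.
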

In view of Proposition \ref{prop: constructing solutions} below, the above Theorem \ref{thm: symmetric convergence} implies the following. Suppose $w$, $\tilde{w}\in \R^N$, and $(T_{h, \thresh}, \weightvect_{h, \thresh})$ and $(T, \weightvect)$ are minimizers for \eqref{eqn: monge ver} with storage functions $F_{\tilde{w}, h, \thresh}$ and $F_{{w}}$ respectively. By \cite[Proposition 3.5 and Theorem 4.7]{BansilKitagawa19a}, there exists a vector $\psi_{h, \thresh}$  such that $T^{-1}_{h, \thresh}(\{y_i\})=\Lag_i(\psi_{h, \thresh})$ up to sets of zero $\mu$ measure.  By the uniqueness statement of Proposition \ref{prop: constructing solutions}, we see that $w_{h, \thresh}(\psi_{h, \thresh}) =\tilde{w}$, hence the above theorem shows the $\mu$-symmetric distance between $T^{-1}_{h, \thresh}(\{y_i\})$ and $T^{-1}(\{y_i\})$ is controlled by $h$, $\thresh$, and $\norm{w_{h, \thresh}(\psi_{h, \thresh}) - {w}}_1$ (recall this last term is the error term from Algorithm \ref{alg: damped newton}).

The final theorem below shows that when the Laguerre cell associated to the problem with $h=0=\thresh$ has nonzero Lebesgue measure, the above closeness can be measured in the Hausdorff distance. Before stating this result, we recall the following definition.
\begin{defin}
	If $1\leq q\leq \infty$, a probability measure $\mu$ on $X$ satisfies a \emph{$(q, 1)$-Poincar\'e-Wirtinger inequality}  if there is a constant $\Cpw>0$ such that for any $f\in C^1(X)$,
	\begin{align*}
	\norm{f-\int_Xfd\mu}_{L^q(\mu)}\leq \Cpw \norm{\nabla f}_{L^1(\mu)}.
	\end{align*}
	We will say ``$\mu$ satisfies a \emph{$(q, 1)$-PW inequality}''.
\end{defin}
\begin{rmk}\label{rmk: PW}
	Recall that some kind of connectedness condition on $\spt \mu$ is necessary in order to obtain invertibility of the derivative of the map $G$ in nontrivial directions (see the discussion immediately preceding \cite[Definition 1.3]{KitagawaMerigotThibert19}), and a Poincar{\'e}-Wirtinger inequality can be viewed as a quantitatively strengthened version of connectivity which is sufficient for our purposes.
	
	If $\rho$ is bounded away from zero on $\spt\rho$ and the support is connected, it satisfies a $(\frac{n}{n-1}, 1)$-PW inequality, by scaling $q=\frac{n}{n-1}$ is the largest possible value of $q$. We will only use the case of $q>1$ in order to obtain quantitative bounds on the Hausdorff convergence of Laguerre cells, namely for Theorem \ref{thm: hausdorff convergence}. We also remark that in Theorem \ref{thm: hausdorff convergence}, we can make do with $q=1$ if all of the Laguerre cells of the limit problem have nonzero measure. Below, $d_\H$ is the Hausdorff distance between subsets of $\R^n$.
\end{rmk}
\begin{thm}\label{thm: hausdorff convergence}
	Suppose $c$ and $\mu$ satisfy the same conditions as Theorem \ref{thm: linear convergence}, and $\mu$ satisfies a $(q, 1)$-PW inequality for some $q\geq 1$. Also suppose $h>0$, $\thresh\in (0, \frac{1}{2N})$, and ${w}\in \R^N$ with $\sum_{i=1}^N {w}^i>1$, ${w}^i\geq 0$, and $(T, \weightvect)$ is a pair minimizing \eqref{eqn: monge ver} with the storage fee function $F_{w}$, and $\psi\in \R^N$ is such that $T_\psi=T$ $\mu$-a.e.. 
	\begin{enumerate}
		\item If $\{h_k\}_{k=1}^\infty$, $\{\thresh_k\}_{k=1}^\infty\subset\R_{>0}$, $\{\psi_k\}_{k=1}^\infty$, $\psi_k\in\mathcal{K}^{\thresh_k}$ are sequences such that $w_{h_k, \thresh_k}(\psi_k)\to w$, $h_k\searrow 0$, $\thresh_k\searrow 0$ as $k\to \infty$, and $\L(\Lag_i(\psi))>0$, then
		\begin{align*}
		\lim_{k\to 0}d_\H({\Lag_{i}(\psi_k)}, \Lag_i(\psi) )=0.
		\end{align*}
		\item If $q>1$, $\psi_{h, \thresh}\in \mathcal{K}^\thresh$, there are universal constants $C_1$, $C_2>0$ such that,
		\begin{align*}
		d_\H({\Lag_{i}(\psi_{ h, \thresh})}, \Lag_i(\psi) )^n \leq 
		&\frac{C_1\Cpw N^{5} q(N\thresh + \norm{w_{h, \thresh}(\psi_{h, \thresh}) - w}_1 + 2N\sqrt{2C_Lh})}{\thresh^{1/q}(q-1)\left(\arccos(1-C_2\L(\Lag_i(\psi))^2)\right)^{n-1}},
		\end{align*}
		as long as
		\begin{align}\label{eqn: constraint}
		\frac{N^{5}\Cdiff C_\nabla\Cpw  q(N\thresh +  \norm{w_{h, \thresh}(\psi_{h, \thresh}) - w}_1 + 2N\sqrt{2C_Lh})}{\thresh^{1/q} (q-1)} < \L(\Lag_i(\psi))
		\end{align}
		where $\Cdiff$ and $C_L$ are the universal constants defined in \cite[Lemma 3.4]{BansilKitagawa20a} and Lemma \ref{lem: strong convex C} respectively.
\end{enumerate}
\end{thm}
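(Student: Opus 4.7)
The strategy is to chain two existing estimates: Theorem~\ref{thm: symmetric convergence} controls the $\mu$-symmetric difference between $\Lag_i(\psi_{h, \thresh})$ and $\Lag_i(\psi)$ in terms of $h$, $\thresh$, and the Newton error, while the stability results of \cite{BansilKitagawa20a} (in particular \cite[Lemma 3.4]{BansilKitagawa20a}) convert $\mu$-symmetric control of two $c$-convex sets into $d_\H$ control, quantitatively when a $(q,1)$-PW inequality with $q > 1$ is available. The theorem is then essentially a substitution exercise, with the constraint \eqref{eqn: constraint} serving as the hypothesis-check for the stability lemma.

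As a preliminary reduction, since $T = T_\psi$ $\mu$-a.e., we have $T^{-1}(\{y_i\}) = \Lag_i(\psi)$ up to $\mu$-null sets, so \eqref{eqn: mu diff bound} directly bounds $\Delta_\mu(\Lag_i(\psi_{h, \thresh}), \Lag_i(\psi))$ for each $i$. Part~(1) then follows: along the sequence $\{\psi_k\}$, the right-hand side of \eqref{eqn: mu diff bound} tends to zero since $h_k, \thresh_k \to 0$ and $w_{h_k, \thresh_k}(\psi_k) \to w$, hence $\Delta_\mu(\Lag_i(\psi_k), \Lag_i(\psi)) \to 0$. Given $\L(\Lag_i(\psi)) > 0$ and that both sets are $c$-convex intersections of the compact set $X$, the qualitative Hausdorff-stability statement from \cite{BansilKitagawa20a} (equivalently, a compactness argument passing to a Hausdorff subsequential limit and identifying it via $\mu$-a.e.\ agreement with $\Lag_i(\psi)$) upgrades this to $d_\H(\Lag_i(\psi_k), \Lag_i(\psi)) \to 0$.

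For part~(2), I apply \cite[Lemma 3.4]{BansilKitagawa20a} directly to $A := \Lag_i(\psi_{h, \thresh})$ and $B := \Lag_i(\psi)$. That lemma produces a bound of the shape
\[
d_\H(A, B)^n \lesssim \frac{\Cpw \cdot q}{(q-1)\arccos(1 - C_2\L(B)^2)^{n-1}} \cdot \frac{\Delta_\mu(A, B)}{\thresh^{1/q}},
\]
where the $\arccos$ factor reflects a $c$-convex fattening quantified by $\L(B)$, the prefactor $\Cpw \cdot q/(q-1)$ enters from the standard interpolation step converting an $L^1$ bound on the potential difference into an $L^q$-bound via the $(q,1)$-PW inequality, and the $\thresh^{-1/q}$ arises by using $G^i(\psi_{h, \thresh}) \geq \thresh$ (which holds because $\psi_{h, \thresh} \in \mathcal{K}^\thresh$) to provide a uniform positive lower bound on the mass inside $A$ needed to localize the PW argument. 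Substituting the symmetric-distance estimate from Theorem~\ref{thm: symmetric convergence} into this bound, and combining the $8N$ prefactor in \eqref{eqn: mu diff bound} with the $N$-dependent constants coming from \cite[Lemma 3.4]{BansilKitagawa20a}, yields the $N^5$-factor in the numerator and the stated estimate.

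The main obstacle is not conceptual but technical: correctly invoking \cite[Lemma 3.4]{BansilKitagawa20a} in the present setting, in particular verifying its hypotheses, which is exactly where \eqref{eqn: constraint} comes in---that inequality expresses that the total approximation error is smaller than (a multiple of) $\L(\Lag_i(\psi))$, ensuring the ``core'' of the limit Laguerre cell is not destroyed and that the quantitative rate involving $\L(\Lag_i(\psi))$ in the denominator remains meaningful. A secondary point is simply tracking $h$, $\thresh$, $q$, $\Cpw$, $\Cdiff$, $C_\nabla$, and $N$ through the PW-to-Hausdorff conversion so that the $\thresh^{1/q}$ and $\arccos$ factors land in the denominator as stated; once this bookkeeping is done both parts of the theorem follow.
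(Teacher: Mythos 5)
Your overall strategy — use the quantitative estimate from Theorem~\ref{thm: symmetric convergence} and then convert it to Hausdorff distance via the stability results of \cite{BansilKitagawa20a} — is the right one, and the role you ascribe to constraint~\eqref{eqn: constraint} (a hypothesis check ensuring the limit cell isn't wiped out) is also correct. However, the precise path you take through the intermediate results differs from the paper's and is partly speculative. The paper uses \eqref{eqn: G bound}, which controls the mass discrepancy $\norm{G(\psi_{h,\thresh}) - \weightvect}_1$, and feeds that directly into \cite[Theorem 1.10]{BansilKitagawa20a}; the key observation is that once the optimal weight $\weightvect$ is known, minimizers of \eqref{eqn: monge ver} are minimizers of a classical optimal transport problem, so the stability theory of \cite{BansilKitagawa20a} for classical Laguerre cells applies as a black box. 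You instead invoke \eqref{eqn: mu diff bound} (the $\mu$-symmetric difference bound) and claim to apply \cite[Lemma 3.4]{BansilKitagawa20a} directly, reconstructing its shape as $d_\H(A,B)^n \lesssim \frac{\Cpw q}{(q-1)\cdots}\cdot\frac{\Delta_\mu(A,B)}{\thresh^{1/q}}$. That reconstruction is a guess, and the structure of the final bound undercuts it: the term $N\thresh + \norm{w_{h,\thresh}(\psi_{h,\thresh}) - w}_1 + 2N\sqrt{2C_L h}$ appearing in both the bound and constraint~\eqref{eqn: constraint} is exactly (half of) the right-hand side of \eqref{eqn: G bound}, with no $8N$ factor in sight, which signals that the input quantity is the mass difference, not the symmetric difference. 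Lemma~3.4 of \cite{BansilKitagawa20a} is cited only because the constant $\Cdiff$ originates there; it is not the result being applied. Similarly for part (1), the paper uses convergence of $G(\psi_k)\to\weightvect$ and again invokes Theorem~1.10 of \cite{BansilKitagawa20a}, rather than a compactness argument or the qualitative Hausdorff-stability wording you suggest. To make your proof match, replace the use of \eqref{eqn: mu diff bound} with \eqref{eqn: G bound}, and apply \cite[Theorem 1.10]{BansilKitagawa20a} in both parts; everything else you say is then essentially a correct description of that substitution.
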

\subsection{Outline of the paper}
In Section \ref{section: properties of w} we give some useful properties of the mapping $w_{h, \thresh}$. In Section \ref{section: convergence of algorithm}, we prove Theorem \ref{thm: linear convergence} on the convergence rate of our Algorithm \ref{alg: damped newton}. We also give a crude estimate on the number of iterations necessary to get within a desired error in terms of the parameters $h$, $\thresh$, and $N$. In section \ref{section: Stability of Laguerre Cells} we prove Theorem \ref{thm: symmetric convergence} and \ref{thm: hausdorff convergence} on the convergence of the Laguerre cells. In Section \ref{section: numerical examples} we present some numerical examples. These examples will include a comparison of performance with the algorithm from \cite{KitagawaMerigotThibert19}, and cases which are outside of the scope of this previous work. Appendix \ref{appendix: strong convexity} contains a short result on strong convexity of the transport cost as a function of the dual variables $\psi$, needed for Theorem \ref{thm: hausdorff convergence}.

\section{Properties of the mapping $w_{h, \thresh}$}\label{section: properties of w}
In this section, we gather some properties and estimates on the mapping $w_{h, \thresh}$ which will be crucial in the proofs of all of our main theorems. For the remainder of the paper, we assume that $c$ satisfies \eqref{Reg}, \eqref{Twist}, and $\mu$ is absolutely continuous. For this section and the following, we also assume $c$ satisfies \eqref{QC}, $\mu=\rho dx$ for some density $\rho\in C^{0, \alpha}(X)$, for some $\alpha\in (0, 1]$, and $X$ is a bounded set, $c$-convex with respect to $Y$ such that $\spt\mu\subset X$.

\subsection{Solutions of the approximating problem with $F_{w, h, \thresh}$}
We will begin by justifying the remarks following Algorithm \ref{alg: damped newton}.
\begin{defin}
	The \emph{subdifferential} of a convex function $F: \R^N\to\R\cup\{+\infty\}$ at any point $x$ is defined by the set
	\begin{align*}
		\subdiff{F}{x}:=\{p\in \R^N\mid F(y)\geq F(x)+\inner{p}{y-x},\ \forall y\in \R^N\}.
	\end{align*}
\end{defin}
\begin{prop}\label{prop: constructing solutions}
	Fix $h$, $\thresh>0$ and $w\in \R^N$ with $w^i\geq 0$, $\sum_{i=1}^N w^i\geq 1$. Then if $\psi\in \R^N$ is such that $w_{h, \thresh}(\psi)=w$, the pair $(T_{\psi}, G(\psi))$ is the unique solution to the minimization problem \eqref{eqn: monge ver} with storage fee function given by $F_{w, h, \thresh}$ (with $T_{\psi}$ defined as in Remark \ref{rmk: brenier solutions}).
\end{prop}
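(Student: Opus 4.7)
The strategy is to verify the first-order optimality condition identified in Remark \ref{rmk: first order cond}, namely $\psi \in \subdiff{F_{w,h,\thresh}}{G(\psi)}$. By \cite[Theorem 4.7]{BansilKitagawa19a}, if this holds then $(T_\psi, G(\psi))$ solves \eqref{eqn: monge ver} with $F = F_{w,h,\thresh}$. First I would observe that the hypothesis $w_{h,\thresh}(\psi) = w$ (combined with $w^i > 0$, which must hold since $g > 1$ on $\R$) forces $G^i(\psi) - \thresh \in (0, w^i)$ for each $i$: writing $w^i = (G^i(\psi) - \thresh)g(\psi^i/h)$ with $g(\psi^i/h) > 1$ gives $0 < G^i(\psi) - \thresh < w^i$. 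Thus $G(\psi)$ lies in the relative interior of the effective domain of $F_{w,h,\thresh}$, so each $\approxfee_{w^i, h}$ is smooth at $G^i(\psi) - \thresh$, and the normal cone to $\weightvectset$ at this point is $\R\onevect$ (coming from the affine constraint $\sum_i \weightvect^i = 1$). Hence
\begin{align*}
\subdiff{F_{w,h,\thresh}}{G(\psi)} = \bigl(\approxfee'_{w^i,h}(G^i(\psi) - \thresh)\bigr)_{i=1}^N + \R\onevect.
\end{align*}

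The main algebraic step is to verify that $\psi^i = \approxfee'_{w^i,h}(G^i(\psi) - \thresh)$ for every $i$, which would give $\psi \in \subdiff{F_{w,h,\thresh}}{G(\psi)}$ with $\lambda = 0$. Writing $s = \psi^i/h$, $a = G^i(\psi) - \thresh$, $b = w^i$, and using the identity
\begin{align*}
g(s) = 2\sqrt{1+s^2}\bigl(\sqrt{1+s^2} - s\bigr) = \frac{2\sqrt{1+s^2}}{\sqrt{1+s^2}+s}
\end{align*}
(which follows from $(\sqrt{1+s^2}-s)(\sqrt{1+s^2}+s)=1$), the defining equation $a g(s) = b$ rearranges to $(2a-b)\sqrt{1+s^2} = bs$. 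Squaring and simplifying gives $s^2 \cdot 4a(b-a) = (2a-b)^2$; since the sign of $s = (2a-b)\sqrt{1+s^2}/b$ matches the sign of $2a-b$, we recover
\begin{align*}
s = \frac{2a - b}{2\sqrt{a(b-a)}}, \quad \text{i.e.,} \quad \psi^i = h \cdot \frac{2(G^i(\psi)-\thresh) - w^i}{2\sqrt{(G^i(\psi)-\thresh)(w^i - G^i(\psi)+\thresh)}} = \approxfee'_{w^i, h}(G^i(\psi) - \thresh),
\end{align*}
as required. This is the only delicate step; everything else is essentially bookkeeping.

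For uniqueness, I would argue that each $\approxfee_{w^i, h}$ is strictly convex on $(0, w^i)$ (a direct computation gives $\approxfee''_{w^i, h}(t) = h(w^i)^2/(4(t(w^i-t))^{3/2}) > 0$), so $F_{w,h,\thresh}$ is strictly convex on its effective domain. Since the classical semi-discrete transport cost $\weightvect \mapsto \min_{T_\#\mu = \nu_\weightvect}\int c(x, T(x))\, d\mu$ is convex in $\weightvect$, the full objective in \eqref{eqn: monge ver} is strictly convex in $\weightvect$, so the optimal weight vector is unique and must equal $G(\psi)$. Combined with absolute continuity of $\mu$ and \eqref{Twist}, the generalized Brenier theorem recalled in Remark \ref{rmk: brenier solutions} forces the optimal transport map to agree with $T_\psi$ $\mu$-almost everywhere, giving uniqueness of the pair.
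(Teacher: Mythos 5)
Your proof is correct and follows essentially the same route as the paper: both verify the first-order condition $\psi \in \subdiff{F_{w,h,\thresh}}{G(\psi)}$ via the algebraic identity $\approxfee'_{w^i,h}(G^i(\psi)-\thresh)=\psi^i$ coming from the defining relation $w^i=(G^i(\psi)-\thresh)g(\psi^i/h)$, and then invoke \cite[Theorem 4.7]{BansilKitagawa19a}. The paper substitutes $t_0=(t-\thresh)g(t_1/h)$ directly into the derivative formula whereas you invert the relation to solve for $\psi^i$ (the same computation run in opposite directions), and you add an explicit strict-convexity uniqueness argument that the paper obtains directly from the cited theorem, so the substance is the same.
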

\begin{proof}
	We first calculate for any $t_0\geq 0$ and $t\in (\thresh, t_0+\thresh)$, $\displaystyle\frac{d}{dt}\approxfee_{t_0, h}(t-\thresh) = h \frac{2(t - \thresh)-t_0}{2\sqrt{(t - \thresh)(t_0-t + \thresh)}}$. Thus for any $t$ and $t_1\geq 0$ if we take the choice 
	\begin{align*}
	t_0 = 2(t - \thresh) \( 1+(\frac{t_1}{h})^2 - \frac{t_1}{h}\sqrt{1+(\frac{t_1}{h})^2} \)=(t-\thresh)g(\frac{t_1}{h})
	\end{align*}
	we obtain
	\begin{align*}
	\frac{d}{dt}\approxfee_{t_0, h}(t-\thresh)
	&= h\frac{2(t - \thresh)-(2(t - \thresh)( 1+(\frac{t_1}{h})^2 - \frac{t_1}{h}\sqrt{1+(\frac{t_1}{h})^2} ))}{2\sqrt{(t - \thresh)((2(t - \thresh)( 1+(\frac{t_1}{h})^2 - \frac{t_1}{h}\sqrt{1+(\frac{t_1}{h})^2} ))-(t - \thresh))}} \\
	&= t_1 \frac{\sqrt{1+(\frac{t_1}{h})^2} -\frac{t_1}{h}}
	{\sqrt{(\sqrt{1+(\frac{t_1}{h})^2} - \frac{t_1}{h})^2}} 
	= t_1.
	\end{align*}
	Thus, taking $t=G(\psi)$ and $t_1=\psi^i$, $t_0=(w_{h, \thresh}(\psi))^i$ for each $i$ in the calculation above, we see that if $w_{h, \thresh}(\psi)=w$, we will have $\psi\in \subdiff{F_{w, h, \thresh}}{G(\psi)}$. Since $F_{w, h, \thresh}$ is a proper, convex function that is $+\infty$ outside the set $\weightvectset$, by \cite[Theorem 4.7]{BansilKitagawa19a} we obtain that the pair $(T_\psi, G(\psi))$ is the unique minimizing pair in the problem \eqref{eqn: monge ver} with storage fee function $F_{w, h, \thresh}$.
\end{proof}

\subsection{Estimates on $w_{h, \thresh}$}
Next we will obtain invertibility of $Dw_{h, \thresh}$ on the set $\normset{w, h, \thresh}$. This normalization will be critical in obtaining the necessary estimates to justify convergence of our Newton algorithm. For the remainder of this section and the following Section \ref{section: convergence of algorithm}, we will not be as explicit in terms of the dependence of various quantities on $N$. Related to this, for any vector valued map $\Phi: \Omega\to \R^N$ on any domain $\Omega\subset \R^N$, we will write associated $\alpha$-H\"older seminorms as 
\begin{align*}
 [\Phi]_{C^{0, \alpha}(\overline \Omega)}:&=\sup_{x\neq y\in\Omega}\frac{\norm{\Phi(x)-\Phi(y)}}{\norm{x-y}^\alpha}\leq \sqrt{N}\max_{1\leq i\leq N}\sup_{x\neq y\in\Omega}\frac{\abs{\Phi^i(x)-\Phi^i(y)}}{\norm{x-y}^\alpha}\\
 [D\Phi]_{C^{0, \alpha}(\overline \Omega)}:&=\sup_{x\neq y\in \Omega}\frac{\norm{D\Phi(x)-D\Phi(y)}}{\norm{x-y}^\alpha}\leq N\max_{1\leq i, j\leq N}\sup_{x\neq y\in\Omega}\frac{\abs{D_j\Phi^i(x)-D_j\Phi^i(y)}}{\norm{x-y}^\alpha},
\end{align*}
and
\begin{align*}
 \norm{\Phi}_{C^{1}( \Omega)}:&=\sup_{x\in {\Omega}}\norm{\Phi(x)}+\sup_{x\in {\Omega}} \norm{D\Phi(x)}, \quad
 \norm{\Phi}_{C^{1, \alpha}(\overline \Omega)}:= \norm{\Phi}_{C^{1}( \Omega)}+[D\Phi]_{C^{0, \alpha}(\overline \Omega)}
\end{align*}
where $\norm{D\Phi(x)}$ is the operator norm. 
In particular, for universal constants $C>0$ (that only depend on $N$) we obtain $\norm{\Phi(\psi_1)-\Phi(\psi_2)}\leq C [\Phi]_{C^{0, \alpha}(\overline \Omega)}\norm{\psi_1-\psi_2}^\alpha$, and similar for $D\Phi$.

\begin{prop}\label{prop: Dw estimates}
	Fix $h>0$, $\thresh\in (0, \frac{1}{2N})$, $\epsilon_0>0$, and $w\in \R^N$ with $\sum_{i=1}^Nw^i \geq 1$, $w^i\geq 0$, and suppose $c$, $X$, and $\mu$ satisfy the same conditions as Theorem \ref{thm: linear convergence}. Then %
	\begin{enumerate}
		\item $\normset{w, h, \thresh}$ is bounded and nonempty.
		\item $w_{h, \thresh}$ is differentiable on $\mathcal{K}^\thresh$.
		\item $Dw_{h, \thresh}(\psi)$ is invertible whenever $\psi\in\normset{w, h, \thresh}\cap\mathcal{W}^{\epsilon_0}$.
	\end{enumerate}
	Moreover if $h\leq 1$, there exists a universal constant $C>0$ such that
	\begin{align}
	\diam(\normset{w, h, \thresh})&\leq C\thresh^{-\frac{1}{2}}\label{eqn: diameter bound}\\
		\norm{w_{h, \thresh}}_{C^{1, \alpha}(\overline{\normset{w, h, \thresh}})}& =: L \leq C \max\(h^{-2}\thresh^{-2}, h^{-3}\thresh^{-\frac{1}{2}}\), \label{eqn: lipschitz bound w}\\
		\sup_{\psi\in \normset{w, h, \thresh}\cap\mathcal{W}^{\epsilon_0}}\norm{Dw_{h, \thresh}(\psi)^{-1}}& =: \kappa^{-1} \leq C \epsilon_0^{-1} h^{-6}\thresh^{-\frac{3}{2}}.\label{eqn: inverse Dw bound}
	\end{align}
\end{prop}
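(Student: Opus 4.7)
My plan is to deduce each claim from standard properties of the measure map $G$ in semi-discrete optimal transport, combined with direct computations on the scalar function $g$.

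For nonemptiness and boundedness of $\normset{w,h,\thresh}$, I would first exhibit a point. Since $\mathcal{K}^\thresh$ is nonempty for $\thresh<1/N$ and invariant under the translation $\psi\mapsto \psi + r\onevect$, and the function $r \mapsto \sum_i w_{h,\thresh}^i(\psi + r\onevect)$ is continuous and strictly decreasing in $r$ with limits $1 - N\thresh$ as $r\to+\infty$ (using $g(t)\to 1$) and $+\infty$ as $r\to-\infty$, the intermediate value theorem supplies an $r$ achieving $\sum_i w^i \geq 1 > 1 - N\thresh$; here one uses $\thresh < 1/(2N)$. Boundedness then follows by combining the standard diameter bound for $\mathcal{K}^\thresh$ modulo $\onevect$-translations (which scales like $\thresh^{-1/2}$) with control on the translation parameter extracted from the Taylor expansion $g(t) - 1 \sim 1/(4t^2)$ as $t\to+\infty$. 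Differentiability of $w_{h,\thresh}$ on $\mathcal{K}^\thresh$ is immediate from the $C^{1,\alpha}$ regularity of $G$ (standard under \eqref{Reg}, \eqref{Twist}, \eqref{QC} and $\rho\in C^{0,\alpha}$) together with smoothness of $g$; differentiation yields
\begin{align*}
Dw_{h,\thresh}(\psi) = \operatorname{diag}(g(\psi^i/h))\,DG(\psi) + \tfrac{1}{h}\operatorname{diag}\!\bigl((G^i(\psi)-\thresh)\, g'(\psi^i/h)\bigr) =: A\,DG + B.
\end{align*}

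The conceptually novel step is invertibility of $Dw_{h,\thresh}$ without any connectedness hypothesis on $\spt\mu$. Factoring $Dw_{h,\thresh} = A(DG + A^{-1}B)$, the inner matrix is symmetric: $DG$ is symmetric negative semi-definite (a standard property of the Laguerre cell measure map), while $A^{-1}B$ is a \emph{strictly negative} diagonal matrix, since $g>0$ and $g'<0$ everywhere. Hence $\langle v,(DG+A^{-1}B)v\rangle < 0$ for every nonzero $v\in\R^N$, so $DG+A^{-1}B$ is negative definite and $Dw_{h,\thresh}$ is invertible. The crucial observation is that the strict negativity of $A^{-1}B$ supplies coercivity in \emph{every} direction, and not merely in directions transverse to $\ker DG$; this is precisely what allows us to bypass the Poincar\'e--Wirtinger/connectedness assumption used in \cite{KitagawaMerigotThibert19}.

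For the quantitative estimates, a direct computation using the identity $1/g(t) = \tfrac12 + t/(2\sqrt{1+t^2})$ yields the clean formula $-g'(t)/g(t)^2 = 1/(2(1+t^2)^{3/2})$, so the diagonal entries of $-A^{-1}B$ are exactly $w_{h,\thresh}^i(\psi)/\bigl(2h(1+(\psi^i/h)^2)^{3/2}\bigr) \geq \epsilon_0/\bigl(2h(1+(\psi^i/h)^2)^{3/2}\bigr)$ on $\mathcal{W}^{\epsilon_0}$; combining a pointwise bound $|\psi^i|\leq C(1+h)\thresh^{-1/2}$ for $\psi\in\normset{w,h,\thresh}$ with $\norm{A^{-1}}\leq 1$ (from $g\geq 1$) then yields \eqref{eqn: inverse Dw bound}. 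The diameter bound \eqref{eqn: diameter bound} and the $C^{1,\alpha}$ estimate \eqref{eqn: lipschitz bound w} follow from composing H\"older norm estimates on $G$ and $DG$ from semi-discrete OT regularity with derivatives of $g$, the factors of $h^{-1}$ entering through the rescaling $\psi^i\mapsto\psi^i/h$. The main technical obstacles are purely bookkeeping: (i) establishing the pointwise bound on $|\psi^i|$ by combining the diameter bound on $\mathcal{K}^\thresh\cap\{\onevect\}^\perp$ with the Taylor expansion pinning down the translation parameter, and (ii) iterating product and chain rules while tracking the (non-sharp) exponents of $h$ and $\thresh$ appearing in \eqref{eqn: lipschitz bound w} and \eqref{eqn: inverse Dw bound}. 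The conceptual heart of the proposition, however, lies entirely in the diagonal coercivity argument described above, which is what enables the removal of the Poincar\'e--Wirtinger hypothesis.
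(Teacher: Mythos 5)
Your proposal follows essentially the same route as the paper: the same intermediate-value argument for nonemptiness, the same propagation of coordinate bounds via nonemptiness of Laguerre cells for boundedness, and the same factorization $Dw_{h,\thresh}=\diag(g(\psi^i/h))\bigl(DG + \text{strictly negative diagonal}\bigr)$, exploiting that $DG$ is symmetric negative semi-definite (off-diagonals nonnegative, rows sum to zero) while the diagonal piece supplies coercivity in every direction, which is exactly how the paper circumvents the Poincar\'e--Wirtinger hypothesis. The clean identity $-g'(t)/g(t)^2 = 1/\bigl(2(1+t^2)^{3/2}\bigr)$ you derive is a pleasant simplification not recorded in the paper's proof, but the overall structure of the argument, including the derivation of \eqref{eqn: diameter bound}--\eqref{eqn: inverse Dw bound} by combining the coordinate bounds on $\normset{w,h,\thresh}$ with the H\"older estimates on $G$ and $DG$, is identical.
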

\begin{proof}[Proof of Proposition \ref{prop: Dw estimates}]
	Throughout the proof, $C>0$ will denote a universal constant whose value may change from line to line.
	
	We first calculate
	\begin{align*}
		g'(t) &= 2 \(2t - \sqrt{1+t^2} - \frac{t^2}{\sqrt{1+t^2}} \)
		=\frac{2(2t\sqrt{1+t^2}-1-t^2-t^2)}{\sqrt{1+t^2}}=-\frac{2(t-\sqrt{1+t^2})^2}{\sqrt{1+t^2}}<0.
	\end{align*}
	In particular, $g$ is continuous and strictly decreasing on $\R$, and it is easily seen that $\lim_{t\to-\infty}g=+\infty$ and $\lim_{t\to +\infty}=1$.  Now notice there exists at least one vector $\psi\in {\mathcal{K}^\thresh}$, for such a $\psi$, $G^i(\psi)-\thresh>0$ for all $i$.  Since adding a multiple of $\onevect$ to $\psi$ does not change the value of $G(\psi)$ and $\sum_{i=1}^N(G^i(\psi)-\thresh)<1\leq \sum_{i=1}^Nw^i$, we can see there exists some $r\in \R$ such that $\sum_{i=1}^Nw^i_{h, \thresh}(\psi+r\onevect)=\sum_{i=1}^N(G^i(\psi+r\onevect)-\thresh)g(\frac{\psi^i+r}{h})=\sum_{i=1}^Nw^i$, i.e. $\normset{w, h, \thresh}$ is nonempty.
	
	Next we show boundedness of $\normset{w, h, \thresh}$. If $\psi\in \normset{w, h, \thresh}$, we calculate
	\begin{align*}
	\sum_{i=1}^N  w^i 
	= \sum_{i=1}^N (G^i(\psi) - \thresh)g(\frac{\psi^i}{h}) 
	\leq \sum_{i=1}^N (G^i(\psi) - \thresh)\max_j g(\frac{\psi^j}{h}) 
	= \max_j g(\frac{\psi^j}{h}) (1 - N\thresh).
	\end{align*}

	Hence $\max_j g(\frac{\psi^j}{h}) \geq \frac{\sum_{i=1}^N w^i }{1 - N\thresh} \geq \frac{1}{1 - N\thresh}> 1$. In particular we must have an upper bound on some component $\psi^k$, i.e. $\psi^k \leq \ti{M}_1$  where $\ti{M}_1 := hg^{-1}(\frac{1}{1 - N\thresh}) <+\infty$. Now since $X$ is compact, there exist constants $M_1$ and $m_1$ such that $m_1< c(\cdot, y_i) < M_1$ for all $i\in \{1, \ldots, N\}$. If, for any $i$, $\psi^i >  \ti{M}_1 + M_1-m_1$ then we would have $\Lag_i(\psi)=\emptyset$, contradicting $\psi\in \mathcal{K}^\thresh$. %
	
	A similar calculation yields the bound 
	$
	\min_j g(\frac{\psi^j}{h}) \leq\frac{\sum_{i=1}^N w^i}{1 - N\thresh}
	\leq \frac{N}{1-N\thresh}\leq 2N 
	$, 
	thus by an analogous argument we obtain the uniform bounds	
\begin{align}
 \ti{m}&\leq \psi^i\leq \ti{M},\quad\forall \psi \in \normset{w, h, \thresh},\ i\in \{1,\ldots, N\},\notag\\
 \tilde{M}:&=\ti{M}_1 + M_1-m_1=hg^{-1}(\frac{1}{1 - N\thresh}) + M_1-m_1> 0\notag\\
 \tilde{m}:&=\ti{M}_2-M_1+m_1:=hg^{-1}(2N)-M_1+m_1<0.\label{eqn: coordinate bounds}
\end{align}

	We now calculate bounds on $\ti{M}$ and $\ti{m}$ in terms of $N$ and $\thresh$. If $g(t)=a$ for some value $a>1$, we find 	
\begin{align*}
 \frac{a}{2}&=1+t^2-t\sqrt{1+t^2}=1+t(t-\sqrt{1+t^2})=1+t\(\frac{-1}{t+\sqrt{1+t^2}}\)=\frac{\sqrt{1+t^2}}{t+\sqrt{1+t^2}}
\end{align*}
hence
\begin{align}\label{eqn: g inverse}
 (1-\frac{a}{2})\sqrt{1+t^2}=\frac{at}{2}\implies (1-\frac{a}{2})^2=t^2(\frac{a^2}{4}-(1-\frac{a}{2})^2)\implies t^2=\frac{(2-a)^2}{4a-4}.
\end{align}
Now if $a=\frac{1}{1-N\thresh} < 2$, we have $t=g^{-1}(a)>0$, hence by \eqref{eqn: g inverse} above,
\begin{align}
 0< \ti{M}\leq C\(1+h\frac{2-\frac{1}{1-N\thresh}}{2\sqrt{\frac{1}{1-N\thresh}-1}}\)=C\(1+h\frac{1}{2\sqrt{N\thresh(1-N\thresh)}}\)\leq \frac{C}{\sqrt{2N\thresh}},\label{eqn: tilde M_1 est}
\end{align}
where we have used that $\thresh<\frac{1}{2N}$. 
Similarly, for $a=2N>2$, $t=g^{-1}(a)<0$ hence using \eqref{eqn: g inverse} again yields
\begin{align}\label{eqn: tilde M_2 est}
0> \ti{m}&=-C(1+h\frac{2N-2}{2\sqrt{2N-1}})\geq -C\(1+\frac{hN}{\sqrt{N}}\)=-C\sqrt{N}.
\end{align}
Combining this with \eqref{eqn: tilde M_1 est} immediately gives \eqref{eqn: diameter bound}.
	
	We will also have use for some estimates on $g$ and $g'$.  We calculate,
\begin{align*}
g'(\frac{\tilde{M}}{h})&=-\frac{2(\frac{\tilde{M}}{h}-\sqrt{1+\(\frac{\tilde{M}}{h}\)^2})^2}{\sqrt{1+\(\frac{\tilde{M}}{h}\)^2}}=-\frac{2(\tilde{M}-\sqrt{h^2+\tilde{M}^2})^2}{h\sqrt{h^2+\tilde{M}^2}}\\
 &=-\frac{2h^3}{\sqrt{h^2+\tilde{M}^2}(\tilde{M}+\sqrt{h^2+\tilde{M}^2})^2}\leq -\frac{h^3}{2(h^2+\tilde{M}^2)^{3/2}}\leq -\frac{h^3}{2(\frac{2N\thresh h^2+C}{2N\thresh})^{3/2}}\leq -Ch^3N^{\frac{3}{2}}\thresh^{\frac{3}{2}}
 \end{align*}
where we have used \eqref{eqn: tilde M_1 est} in the last line. At the same time,
\begin{align*}
 g'(\frac{\tilde{m}}{h})=-\frac{2(\tilde{m}-\sqrt{h^2+\tilde{m}^2})^2}{h\sqrt{h^2+\tilde{m}^2}}\geq -\frac{CN}{h^2},
\end{align*}
since $g'$ is decreasing and negative, we have for any $\psi\in \normset{w, h, \thresh}$ and index $i$, the estimates
\begin{align}\label{eqn: g' estimates}
 Ch^3N^{\frac{3}{2}}\leq \abs{ g'(\frac{\psi^i}{h})}\leq \frac{CN}{h^2}.
\end{align}
Additionally using \eqref{eqn: tilde M_2 est} and that $h\leq 1$,  for any $\psi\in \normset{w, h, \thresh}$ and index $i$ we have (recall $\ti m$ could be negative here)
\begin{align}
1&\leq g(\frac{\psi^i}{h})\leq g(\frac{\tilde{m}}{h})=2\(1+\(\frac{\tilde{m}}{h}\)^2-\frac{\tilde{m}}{h}\sqrt{1+\(\frac{\tilde{m}}{h}\)^2}\)\notag\\
&=\frac{2\sqrt{h^2+\tilde{m}^2}}{h^2}\(\sqrt{h^2+\tilde{m}^2}-\tilde{m}\)\leq \frac{C\tilde{m}^2}{h^2}\leq \frac{CN}{h^2}.\label{eqn: g estimate}
\end{align}

	Under the current assumptions, we see by \cite[Theorem 4.1]{KitagawaMerigotThibert19} that $G$ is uniformly $C^{1, \alpha}$ on $\normset{w, h, \thresh}\subset \mathcal{K}^\thresh$. We then calculate the derivative of $w_{h, \thresh}$ as
	\begin{align}
		Dw_{h, \thresh}(\psi) &= \diag(g(\frac{\psi^i}{h}))DG(\psi) + \frac{1}{h}\diag((G^i(\psi) - \thresh) g'(\frac{\psi^i}{h}))\notag\\
		&=\diag(g(\frac{\psi^i}{h}))\(\frac{1}{h}\diag\(\frac{(G^i(\psi) - \thresh) g'(\frac{\psi^i}{h})}{g(\frac{\psi^i}{h})}\)+DG(\psi)\)\label{eqn: Dw formula}
	\end{align}
	where $\diag$ of a vector in $\R^N$ is the $N\times N$ diagonal matrix with the entries of the vector on the diagonal. Since $g\geq 1$ on $\R$, we see $\diag(g(\frac{\psi^i}{h}))$ is invertible with all eigenvalues larger than $1$. For any unit vector $V\in \R^N$ we have
	\begin{align*}
	&\inner{\frac{1}{h}\diag\(\frac{ (G^i(\psi) - \thresh) g'(\frac{\psi^i}{h})}{g(\frac{\psi^i}{h})}\)V}{V}+\inner{DG(\psi)V}{V}\\
	&=\frac{1}{h}\sum_{i=1}^N\frac{(G^i(\psi) - \thresh) g'(\frac{\psi^i}{h})}{g(\frac{\psi^i}{h})}(V^i)^2+\inner{DG(\psi)V}{V}=:A+B.
	\end{align*}
	By \cite[Theorem 1.1 and 1.3]{KitagawaMerigotThibert19}, $DG$ is symmetric, every off diagonal entry is nonnegative, and each row sums to zero, hence $B\leq 0$. We also calculate	
\begin{align*}
 A&\leq \frac{1}{h}\max_j \frac{(G^j(\psi) - \thresh) g'(\frac{\psi^j}{h})}{g(\frac{\psi^j}{h})}\sum_{i=1}^N(V^i)^2=\frac{1}{h}\max_j \frac{(G^j(\psi) - \thresh) g'(\frac{\psi^j}{h})}{g(\frac{\psi^j}{h})}\\
 &= \frac{1}{h}\max_j \frac{w^j_{h, \thresh}(\psi) g'(\frac{\psi^j}{h})}{g(\frac{\psi^j}{h})^2}\leq\frac{-m_{h, \thresh} \epsilon_0}{hg(\frac{\tilde{m}}{h})^2}\leq -C \epsilon_0 h^6N^{-\frac{1}{2}}\thresh^{\frac{3}{2}}
\end{align*}
	where we use \eqref{eqn: g estimate} and that $\psi\in\mathcal{W}^{\epsilon_0}$, hence $Dw_{h, \thresh}(\psi)$ is invertible and we obtain \eqref{eqn: inverse Dw bound}.
	
	Finally, since $\normset{w, h, \thresh}$ is bounded by above and $g'$ is clearly a $C^1$ function on $\R$, we can again use \cite[Theorem 4.1]{KitagawaMerigotThibert19} to conclude that $w_{h, \thresh}$ is actually $C^{1, \alpha}$ on $\normset{w, h, \thresh}$. The only thing left is to verify the dependencies of $L>0$ from \eqref{eqn: lipschitz bound w}.  Since $g$ is decreasing on $\R$, by \eqref{eqn: g estimate} we immediately see that $\norm{w_{h, \thresh}}_{L^\infty(\normset{w, h, \thresh})}\leq \frac{CN}{h^2}$ . Also calculating 
 using \eqref{eqn: g' estimates}, \eqref{eqn: Dw formula}, \eqref{eqn: g estimate}, and that $\norm{G}_{C^1(\overline{\mathcal{K}^{\thresh}})}\leq CN$ from \cite[Theorem 1.3]{KitagawaMerigotThibert19}, we see that $\norm{w_{h, \thresh}}_{C^1(\normset{w, h, \thresh})}\leq C(N^2h^{-2}+Nh^{-2})\leq \frac{CN^2}{h^2}$.

For the remainder of the proof, we will not keep explicit track of the dependencies on $N$. Finally, note that
\begin{align}
 [Dw_{h, \thresh}]_{C^{0, \alpha}}&\leq C\bigg(\norm{g(\frac{\cdot}{h})}_{L^\infty}[DG]_{C^{0, \alpha}}+[g(\frac{\cdot}{h})]_{C^{0, \alpha}}\norm{DG}_{L^\infty}\notag\\
 &+\frac{1}{h}([G-\epsilon\onevect]_{C^{0, \alpha}}\norm{g'(\frac{\cdot}{h})}_{L^\infty}+[G-\epsilon\onevect]_{L^\infty}[g'(\frac{\cdot}{h})]_{C^{0, \alpha}})\bigg)\notag\\
 &\leq C\bigg(\norm{g(\frac{\cdot}{h})}_{L^\infty}[DG]_{C^{0, \alpha}}+\diam(\normset{w, h, \thresh})\norm{g'(\frac{\cdot}{h})}_{L^\infty}\norm{DG}_{L^\infty}\notag\\
 &+\frac{\diam(\normset{w, h, \thresh})}{h}(\norm{DG}_{L^\infty}\norm{g'(\frac{\cdot}{h})}_{L^\infty}+[G-\epsilon\onevect]_{L^\infty}[g'(\frac{\cdot}{h})]_{C^{0, 1}})\bigg)\label{eqn: Dw holder expression}
\end{align}
where all norms and seminorms of $g$ and $g'$ are over $[\ti{m}, \ti{M}]$ and the remainder over $\normset{w, h, \thresh}$.

Fixing an index $i$, for any $\psi_1\neq \psi_2\in \normset{w, h, \thresh}$ we have
\begin{align}
 \abs{g'(\frac{\psi_1^i}{h})-g'(\frac{\psi_2^i}{h})}
 &\leq \sup_{t\in [\ti{m}, \ti{M}]}\abs{g''(\frac{t}{h})}\abs{\frac{\psi_1^i}{h}-\frac{\psi_2^i}{h}}\leq \frac{C\norm{\psi_1-\psi_2}}{h},\label{eqn: 2 for Dw holder}
\end{align}
since by direct computation
we see
\begin{align*}
g''(t) 
= \frac{-4t^3 + 4(1+t^2)^{3/2} - 6t}{(1+t^2)^{3/2}} 
= 4 - 2\frac{2t^3 + 3t}{(1+t^2)^{3/2}}
= 4 - 4\frac{t}{(1+t^2)^{1/2}} - 2\frac{t}{(1+t^2)^{3/2}}
\end{align*}
and so
\begin{align*}
\abs{g''(t)} 
\leq 4 + 4 \abs{\frac{t}{(1+t^2)^{1/2}}} + 2 \abs{\frac{t}{(1+t^2)^{3/2}}}
\leq 4 + 4 + 2 \min(\abs {t}, {\abs{t}}^{-2})
\leq 10. 
\end{align*}
At the same time using \eqref{eqn: g' estimates},
\begin{align}
 \abs{g(\frac{\psi_1^i}{h})-g(\frac{\psi_2^i}{h})}&\leq \sup_{t\in [\ti{m}, \ti{M}]}\abs{g'(\frac{t}{h})}^2\abs{\frac{\psi_1^i}{h}-\frac{\psi_2^i}{h}}\leq \frac{C}{h^5}\norm{\psi_1-\psi_2}.\label{eqn: 1 for Dw holder}
\end{align}
Finally, carefully tracing through the proofs leading to \cite[Theorem 4.1]{KitagawaMerigotThibert19} yields that 
\begin{align}\label{eqn: dg holder est}
 	[DG]_{C^{0, \alpha}(\conj{\mathcal{K}^\thresh})}\leq \frac{C}{\thresh^2},
 \end{align}
combining with  \eqref{eqn: g' estimates}, \eqref{eqn: g estimate}, \eqref{eqn: 1 for Dw holder}, \eqref{eqn: 2 for Dw holder}, and that $\norm{G}_{C^1(\conj{\mathcal{K}^\thresh})}\leq CN$  in \eqref{eqn: Dw holder expression} we obtain
\begin{align*}
 [Dw_{h, \thresh}]_{C^{0, \alpha}(\normset{w, h, \thresh})}&\leq C\max\(h^{-2}\thresh^{-2}, h^{-3}\thresh^{-\frac{1}{2}}\).
\end{align*}	
\end{proof}

\section{Convergence of Algorithm \ref{alg: damped newton}}\label{section: convergence of algorithm}
Here we provide the proof of our first main theorem, on global linear and locally superlinear convergence of Algorithm \ref{alg: damped newton}. We remark that the proof below also shows that $\normset{w, h, \thresh}$ is locally a $C^1$ manifold of codimension $1$ in $\R^n$. Again, we will not track explicit dependencies on $N$.

\begin{prop}\label{prop: alg error bounds}
There is a function $r\in C^{1, \alpha}(\overline{\mathcal{K}^\thresh})$ such that for any $\psi\in \R^N$, $r(\psi)$ is the unique number such that $\pi(\psi):=\psi-r(\psi)\onevect\in \normset{w, h, \thresh}$. Moreover, for some universal $C>0$,
\begin{align*}
\norm{D\pi}_{C^{0, \alpha} (\overline{\mathcal{K}^\thresh};\R^N)} &\leq\frac{C}{h^{18}\thresh^{9}}.
\end{align*}
\end{prop}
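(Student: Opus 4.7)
The plan is to build $r$ via a one-variable implicit function argument applied to the scalar constraint defining $\normset{w, h, \thresh}$. Since $G$ is invariant under translating $\psi$ by multiples of $\onevect$, for fixed $\psi \in \mathcal{K}^\thresh$ the quantity $w_{h, \thresh}^i(\psi - r\onevect) = (G^i(\psi) - \thresh)\, g\bigl(\frac{\psi^i - r}{h}\bigr)$ depends on $r$ only through the univariate $g$-factor. So I would set
\[
\Phi(\psi, r) := \sum_{i=1}^N (G^i(\psi) - \thresh)\, g\Bigl(\frac{\psi^i - r}{h}\Bigr) - \sum_{i=1}^N w^i,
\]
so that $\pi(\psi) \in \normset{w, h, \thresh}$ is equivalent to $\Phi(\psi, r(\psi)) = 0$.

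Existence and uniqueness of $r(\psi)$ follow from monotonicity: $g' < 0$ (computed in the proof of Proposition \ref{prop: Dw estimates}) and $G^i(\psi) - \thresh > 0$ on $\mathcal{K}^\thresh$ force $\partial_r \Phi > 0$, so $\Phi(\psi, \cdot)$ is strictly increasing. Because $g(t) \to 1$ as $t \to +\infty$ and $g(t) \to +\infty$ as $t \to -\infty$, and $\sum_i (G^i - \thresh) = 1 - N\thresh$, one obtains $\Phi(\psi, r) \to (1 - N\thresh) - \sum_i w^i \leq -N\thresh < 0$ as $r \to -\infty$ and $\Phi \to +\infty$ as $r \to +\infty$; the intermediate value theorem then yields the unique $r(\psi)$. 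Since $\Phi$ is $C^{1, \alpha}$ in $\psi$ (using the $C^{1, \alpha}$ regularity of $G$ already exploited in the proof of Proposition \ref{prop: Dw estimates}) and smooth in $r$, the implicit function theorem gives $r \in C^{1, \alpha}(\overline{\mathcal{K}^\thresh})$.

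For the quantitative bound, differentiating gives
\[
\nabla r(\psi) = -\frac{\nabla_\psi \Phi(\psi, r(\psi))}{\partial_r \Phi(\psi, r(\psi))}, \qquad D\pi(\psi) = I - \onevect\, \nabla r(\psi)^T,
\]
so it suffices to estimate $\nabla r$ in $C^{0, \alpha}$. The key quantitative input is a lower bound on the denominator: since $\pi(\psi) \in \normset{w, h, \thresh}$ has coordinates in the range of \eqref{eqn: coordinate bounds}, the bound $|g'(t)| \geq C h^3 N^{3/2}$ from \eqref{eqn: g' estimates} holds at each $(\psi^i - r(\psi))/h$, and together with $\sum_i (G^i - \thresh) = 1 - N\thresh \geq 1/2$ this yields a lower estimate of the form $\partial_r \Phi \geq C h^2 \thresh^{3/2}$ (the explicit $\thresh^{3/2}$ coming from $\widetilde M \sim \thresh^{-1/2}$ in \eqref{eqn: tilde M_1 est}). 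Upper bounds on $\nabla_\psi \Phi$ and its Hölder seminorm follow from the pointwise estimates \eqref{eqn: g estimate}, \eqref{eqn: g' estimates}, the Lipschitz bound $|g''|\leq 10$ from the proof of Proposition \ref{prop: Dw estimates}, the $C^1$ bound on $G$, and $[DG]_{C^{0, \alpha}} \leq C/\thresh^2$ from \eqref{eqn: dg holder est}.

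The main obstacle is the bookkeeping of $h$ and $\thresh$ powers in the Hölder seminorm of $\nabla r$. Applying the quotient rule introduces a factor of $(\partial_r \Phi)^{-2}$, contributing $h^{-4}\thresh^{-3}$; the Hölder seminorms of $\nabla_\psi \Phi$ and $\partial_r \Phi$ bring in further factors through products of $DG$, $g$, $g'$ and their $C^{0, \alpha}$ norms on the bounded coordinate range; and one must compose with the $C^{1, \alpha}$ map $\psi \mapsto (\psi, r(\psi))$, which reinserts $\nabla r$ itself. Accumulating these factors should produce the stated exponents $h^{-18}\thresh^{-9}$, and the $L^\infty$ part of the $C^{0, \alpha}$ norm is controlled by the same combination of inputs.
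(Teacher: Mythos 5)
Your proposal is essentially the paper's own proof: same implicit function $\Phi(\psi, r) = \sum_i (G^i(\psi)-\thresh)g\bigl(\frac{\psi^i-r}{h}\bigr) - \sum_i w^i$, same monotonicity-plus-IVT argument for existence and uniqueness of $r(\psi)$, same implicit-function-theorem argument for $C^{1,\alpha}$ regularity, and the same quotient-rule decomposition of $\nabla r$ with the same quantitative inputs (the lower bound $|g'| \gtrsim h^3\thresh^{3/2}$ at the endpoint $\tilde M/h$, $\norm{G}_{C^1}$, $[DG]_{C^{0,\alpha}} \lesssim \thresh^{-2}$, $|g''|\leq 10$, $\diam(\normset{w,h,\thresh}) \lesssim \thresh^{-1/2}$). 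You also correctly flag the subtlety that composing with $\pi$ reintroduces $\nabla r$: the paper exploits that the denominator $H_2(\psi) = \frac{1}{h}\sum_i (G^i(\psi)-\thresh)g'\bigl(\frac{\psi^i-r(\psi)}{h}\bigr)$ satisfies $H_2 \circ \pi = H_2$, so its Hölder seminorm on $\overline{\mathcal{K}^\thresh}$ is controlled by its Lipschitz constant on the bounded set $\normset{w, h, \thresh}$ times $[\pi]_{C^{0,1}}$, which in turn uses the a priori $L^\infty$ bound on $\nabla r$ obtained first. Your sketch leaves the final exponent bookkeeping implicit, but the strategy and all key ingredients match the paper's argument.
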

\begin{proof}
	First we carry out some preliminary analysis. Again, $C>0$ will denote a suitable universal constant throughout the proof. Define $\R^N\times \R\ni (\psi, r)\to \Phi(\psi, r)\in \R$ by
	\begin{align*}
		\Phi(\psi, r)&=\sum_{i=1}^N w_{h, \thresh}^i(\psi-r\onevect)- w^i=\sum_{i=1}^N (G^i(\psi-r\onevect)-\thresh)g(\frac{\psi^i-r}{h})-\sum_{i=1}^N w^i\\
		&=\sum_{i=1}^N (G^i(\psi)-\thresh)g(\frac{\psi^i-r}{h})-\sum_{i=1}^N w^i.
	\end{align*}
	Note for any $\psi\in \R^N$ such that $w_{h, \thresh}^i(\psi)\geq 0$ for all $i\in \{1, \ldots, N\}$, we must have $G^i(\psi)\geq \thresh$, hence $\psi\in \mathcal{K}^\thresh$ for such $\psi$. A quick calculation yields that if $(\psi, r)$ are such that $\psi\in \mathcal{K}^\thresh$ and $\psi-r\onevect\in \normset{w, h, \thresh}$, we have using the calculation immediately preceding \eqref{eqn: g' estimates},
	\begin{align*}
		\frac{\partial}{\partial r}\Phi(\psi, r)%
		&=-\frac{1}{h}\sum_{i=1}^N (G^i(\psi)-\thresh)g'(\frac{\psi^i-r}{h})\geq \frac{CN}{h^3}(1-N\thresh)>0.
	\end{align*}
	Now, the strict monotonicity of $g$ along with the fact that $\sum_{i=1}^Nw^i\geq 1>\sum_{i=1}^N(G^i(\psi)-\epsilon)$ and $g(\R)=(1, \infty)$ implies that for any $\psi\in\R^N$, there exists a unique $r(\psi)\in \R$ such that $\Phi(\psi, r(\psi))=0$, thus the function $\psi\mapsto r(\psi)$ is well-defined. By the above calculation and the implicit function theorem we have that this function $r$ is differentiable near any $\psi\in \mathcal{K}^\thresh$. Differentiating the expression $\Phi(\psi, r(\psi))=0$ with respect to $\psi^j$ at such a $\psi$, we find that
	\begin{align}
		0%
		&=\sum_{i=1}^N \(D_jG^i(\psi)g(\frac{\psi^i-r(\psi)}{h})+(G^i(\psi)-\thresh)g'(\frac{\psi^i-r(\psi)}{h})\frac{\delta^i_j-D_jr(\psi)}{h}\)\notag\\
		\implies D_jr(\psi)&=\frac{\sum_{i=1}^N hD_jG^i(\psi)g(\frac{\psi^i-r(\psi)}{h})+\delta^i_j(G^i(\psi)-\thresh)g'(\frac{\psi^i-r(\psi)}{h})}{\sum_{i=1}^N(G^i(\psi)-\thresh)g'(\frac{\psi^i-r(\psi)}{h})}\notag\\
		&=\frac{(G^j(\psi)-\thresh)g'(\frac{\psi^j-r(\psi)}{h})+h\sum_{i=1}^N D_jG^i(\psi)g(\frac{\psi^i-r(\psi)}{h})}{\sum_{i=1}^N(G^i(\psi)-\thresh)g'(\frac{\psi^i-r(\psi)}{h})}.\label{eqn: grad r}
	\end{align}
	We can see $\norm{Dr}$ is uniformly bounded on $\mathcal{K}^\thresh$: we calculate
	\begin{align}
		\norm{D_jr}_{L^\infty(\mathcal{K}^\thresh)}
		&\leq 1+\frac{\abs{\sum_{i=1}^N  D_jG^i(\psi)g(\frac{\psi^i-r(\psi)}{h})}}{\abs{\frac{1}{h}\sum_{i=1}^N(G^i(\psi)-\thresh)g'(\frac{\psi^i-r(\psi)}{h})}} \notag \\
		&\leq 1+\frac{g(\frac{\ti m}{h})\sum_{i=1}^N \abs{D_jG^i(\psi)}}{\frac{h^3\thresh^{3/2}}{h}(1 - N \thresh)}
		\leq 1+\frac{C(\frac{1}{h^2})}{h^2\thresh^{3/2}(1 - N \thresh)} \leq \frac{C}{h^4\thresh^{\frac{3}{2}}}\label{eqn: Dr bound}
	\end{align}
	where we have used 
	$\norm{G}_{C^1(\overline{\mathcal{K}^\thresh})}\leq C$ from \cite[Theorem 1.3]{KitagawaMerigotThibert19}, 
	\eqref{eqn: g' estimates}, 
	\eqref{eqn: g estimate}, and that $\thresh<\frac{1}{2N}$. %

	Since $\mathcal{K}^\thresh = \bigcap_{i=1}^N (G^i)^{-1}((\thresh, \infty))$, the implicit function theorem combined with \cite[Theorem 5.1]{KitagawaMerigotThibert19} along with the fact that $\partial X$ is locally Lipschitz shows that $\partial \mathcal{K}^\thresh$ is locally Lipschitz. Thus $W^{1, \infty}(\mathcal{K}^\thresh)=C^{0, 1}(\overline {\mathcal{K}^\thresh})$, hence $r$ is uniformly Lipschitz continuous on $\mathcal{K}^\thresh$.

	We will now show a H\"older bound on $Dr$. Note that for each $j$, we can write $D_jr=\frac{H_1}{H_2}$ where $H_1(\psi):= \frac{1}{h}(G^j(\psi)-\thresh)g'(\frac{\psi^j-r(\psi)}{h})+\sum_{i=1}^N  D_jG^i(\psi)g(\frac{\psi^i-r(\psi)}{h})$ belongs to $C^{0, \alpha}(\overline{\mathcal{K}^\thresh})$ (using \cite[Theorem 4.1]{KitagawaMerigotThibert19}) and $H_2(\psi):= \frac{1}{h}\sum_{i=1}^N(G^i(\psi)-\thresh)g'(\frac{\psi^i-r(\psi)}{h})$ belongs to $C^{0, 1}(\overline{\mathcal{K}^\thresh})$, with $H_2\leq -\frac{Ch^3N^{3/2}}{h}(1-N\thresh)<0$ uniformly. Note that
	\begin{align*}
	H_2(\pi(\psi)) 
	&= \frac{1}{h}\sum_{i=1}^N(G^i(\psi - r(\psi)\onevect)-\thresh)g'(\frac{(\psi - r(\psi)\onevect)^i-r(\psi - r(\psi)\onevect)}{h}) \\
	&= \frac{1}{h}\sum_{i=1}^N(G^i(\psi)-\thresh)g'(\frac{(\psi - r(\psi)\onevect)^i}{h}) 
	= H_2(\psi).
	\end{align*}
	Thus for $\psi_1\neq\psi_2\in \mathcal{K}^\thresh$, using \eqref{eqn: g' estimates},
	\begin{align}
	& \abs{D_jr(\psi_1)-D_jr(\psi_2)}=\abs{\frac{H_1(\psi_1)}{H_2(\psi_1)}-\frac{H_1(\psi_2)}{H_2(\psi_2)}}\leq \abs{\frac{H_1(\psi_1)-H_1(\psi_2)}{H_2(\psi_1)}}+\abs{\frac{H_1(\psi_2)(H_2(\psi_2)-H_2(\psi_1))}{H_2(\psi_1)H_2(\psi_2)}}\notag\\
	&= \abs{\frac{H_1(\psi_1)-H_1(\psi_2)}{H_2(\psi_1)}}+\abs{\frac{H_1(\psi_2)(H_2(\pi(\psi_2))-H_2(\pi(\psi_1)))}{H_2(\psi_1)H_2(\psi_2)}}\notag\\
	&\leq \frac{[H_1]_{C^{0, \alpha}(\overline{\mathcal{K}^\thresh})}\norm{\psi_1-\psi_2}^\alpha}{\frac{Ch^3N^{\frac{3}{2}}}{h}(1-N\thresh)}+\frac{\norm{H_1}_{L^\infty(\mathcal{K}^\thresh)}[H_2]_{C^{0, 1}(\overline{\mathcal{K}^\thresh})}\norm{\pi(\psi_2)-\pi(\psi_1)}}{(\frac{Ch^3N^{\frac{3}{2}}}{h}(1-N\thresh))^2}\notag\\
	&\leq C\left( \frac{[H_1]_{C^{0, \alpha}(\overline{\mathcal{K}^\thresh})}}{h^2N^{\frac{3}{2}}(1-N\thresh)}+\frac{\norm{H_1}_{L^\infty(\mathcal{K}^\thresh)}[H_2]_{C^{0, 1}(\overline{\mathcal{K}^\thresh})}\norm{\pi(\psi_2)-\pi(\psi_1)}^{1-\alpha} [\pi]_{C^{0, 1}(\overline{\mathcal{K}^\thresh})}^\alpha } {(h^2N^{\frac{3}{2}}(1-N\thresh))^2} \right) \norm{\psi_1-\psi_2}^\alpha,\label{eqn: Dr holder}
	\end{align}
	hence $D_jr$ is uniformly $C^{0, \alpha}$ on $\mathcal{K}^\thresh$. Our next task will be to estimate ${[Dr]}_{C^{0, \alpha}(\overline{\mathcal{K}^\thresh})}$. In order to do this we estimate each of the terms in the above expression.

	A quick calculation yields
	\begin{align}
	\norm{H_1}_{L^\infty(\mathcal{K}^\thresh)} \leq C(\frac{1}{h^3}+\frac{1}{h^2}) \leq \frac{C}{h^3},\label{eqn: H_1 bound}%
	\end{align}
	and since $\pi(\psi) \in \normset{w, h, \thresh}$, by \eqref{eqn: diameter bound} we have
	\begin{align}
	\norm{\pi(\psi_2)-\pi(\psi_1)}&\leq \diam(\normset{w, h, \thresh})\leq \frac{C}{\thresh^{\frac{1}{2}}}.\label{eqn: pi bound}
	\end{align}
	To estimate $[H_2]_{C^{0, 1}(\overline{\mathcal{K}^\thresh})}$, 
	let $H_{3,i}(\psi) := (G^i(\psi)-\thresh)g'(\frac{\psi^i}{h})$ so that $H_2(\psi) = \frac{1}{h} \sum_i H_{3,i}(\pi(\psi))$.  Just as we estimated the final two terms in \eqref{eqn: Dw holder expression}, we see that $[H_{3,i}]_{C^{0, 1}(\normset{w, h, \thresh})} \leq \frac{C}{h^2}$ %
	by using the bound $\norm{G}_{C^1(\overline{\mathcal{K}^\thresh})}\leq C$ with \eqref{eqn: g' estimates} and \eqref{eqn: 2 for Dw holder}. Furthermore since $\pi(\psi)=\psi-r(\psi)\onevect$,\begin{align}
	[\pi]_{C^{0, 1}(\overline{\mathcal{K}^\thresh})} \leq 1 + N^{1/2}[r]_{C^{0, 1}(\overline{\mathcal{K}^\thresh})} \leq \frac{C}{h^4\thresh^{3/2}}\label{eqn: lipschitz of pi}
	\end{align}%
	by \eqref{eqn: Dr bound}. Hence 
	\begin{align}
	[H_2]_{C^{0, 1}(\overline{\mathcal{K}^\thresh})} 
	\leq \frac{1}{h}\sum_{i=1}^N [H_{3,i} \circ \pi]_{C^{0, 1}(\overline{\mathcal{K}^\thresh})} 
	\leq \frac{1}{h}\sum_{i=1}^N[H_{3,i}]_{C^{0, 1}(\normset{w, h, \thresh})}[\pi]_{C^{0, 1}(\overline{\mathcal{K}^\thresh})}\leq \frac{C}{h^7\thresh^{\frac{3}{2}}}.\label{eqn: H_2 bound}%
	\end{align}
	
	Finally we bound $[H_1]_{C^{0, \alpha}(\overline{\mathcal{K}^\thresh})}$. Let $H_{4,i}(\psi) := D_jG^i(\psi)g(\frac{\psi^i}{h})$ so that $H_1(\psi) =(G^j(\psi)-\thresh)g'(\frac{\psi^j-r(\psi)}{h})+ \sum_i H_{4,i}(\pi(\psi))$. For $\psi_1, \psi_2 \in \overline{\mathcal{K}^\thresh}$ we have
	\begin{align*}
	\abs{H_{4,i}(\pi(\psi_1)) - H_{4,i}(\pi(\psi_2)) }
	&= \abs{(D_jG^i(\psi_1)- D_jG^i(\psi_2))g(\frac{\pi(\psi_1)^i}{h}) - D_jG^i(\psi_2)(g(\frac{\pi(\psi_1)^i}{h})-g(\frac{\pi(\psi_2)^i}{h}))} \\
	&\leq [DG]_{C^{0, \alpha}(\overline{\mathcal{K}^\thresh})}g(\frac{\ti m}{h})\norm{\psi_1-\psi_2}^\alpha + \norm{G}_{C^1(\overline{\mathcal{K}^\thresh})}  \sup_{s\in[\ti m, \ti M]}\abs{g'(\frac{s}{h})}\frac{\norm{\pi(\psi_1)-\pi(\psi_2)}}{h} \\
	&\leq \left( [DG]_{C^{0, \alpha}(\conj{\mathcal{K}^\thresh})}g(\frac{\ti m}{h}) + \frac{C}{h^3}\norm{\pi(\psi_1)-\pi(\psi_2)}^{1-\alpha}[\pi]_{C^{0, 1}(\overline{\mathcal{K}^\thresh})}^\alpha \right) \norm{\psi_1-\psi_2}^\alpha \\
	&\leq C\left( \frac{1}{ h^2\thresh^2} + \frac{1}{h^{3+4\alpha}\thresh^{\frac{1}{2}+\alpha}}   \right) \norm{\psi_1-\psi_2}^\alpha
	\leq \frac{C}{ h^7\thresh^2} \norm{\psi_1-\psi_2}^\alpha	
	\end{align*}
	where we have used \eqref{eqn: g' estimates} to estimate $g'$, \eqref{eqn: g estimate} to estimate $g(\frac{\ti m}{h})$,  \cite[Theorem 1.3]{KitagawaMerigotThibert19} to estimate $\norm{G}_{C^1(\overline{\mathcal{K}^{\thresh}})}$, \eqref{eqn: dg holder est} for $[DG]_{C^{0, \alpha}(\conj{\mathcal{K}^\thresh})}\leq \frac{C}{\thresh^2}$, and \eqref{eqn: pi bound}. Hence we see, using \eqref{eqn: lipschitz of pi}, 
\begin{align*}
&[H_1]_{C^{0, \alpha}(\overline{\mathcal{K}^\thresh})}
\leq C\bigg(\diam (\normset{w, h, \thresh})[G^j]_{C^{0, 1}(\normset{w, h, \thresh})}\norm{g'(\frac{\cdot}{h})}_{L^\infty([\ti m, \ti M])}+(\ti M-\ti m)\norm{G^j}_{L^\infty(\overline{\mathcal{K}^\thresh})}[g'(\frac{\cdot}{h})]_{C^{0, 1}([\ti m, \ti M])}\\
&+\sum_{i=1}^N [H_{4,i}]_{C^{0, \alpha}(\normset{w, h, \thresh})} \bigg)[\pi]_{C^{0, 1}(\overline{\mathcal{K}^\thresh})}
\leq C\(\frac{1}{h^2\thresh^{\frac{1}{2}}}+\frac{1}{h\thresh^{\frac{1}{2}}}+\frac{1}{h^7\thresh^2}\)\frac{1}{h^4\thresh^{\frac{3}{2}}}\leq \frac{C}{ h^{11}\thresh^{\frac{7}{2}}}.
\end{align*}
	Putting the above together with \eqref{eqn: Dr holder}, \eqref{eqn: H_1 bound}, \eqref{eqn: pi bound}, and \eqref{eqn: H_2 bound} we get
	\begin{align*}
	[Dr]_{C^{0, \alpha} (\overline{\mathcal{K}^\thresh})} &\leq C\(
	\frac{[H_1]_{C^{0, \alpha}(\overline{\mathcal{K}^\thresh})}}{h^2N^{\frac{3}{2}}(1-N\thresh)}+\frac{\norm{H_1}_{L^\infty(\mathcal{K}^\thresh)}[H_2]_{C^{0, 1}(\overline{\mathcal{K}^\thresh})}\norm{\pi(\psi_2)-\pi(\psi_1)}^{1-\alpha}[\pi]_{C^{0, 1}(\overline{\mathcal{K}^\thresh})}^\alpha } {(h^2N^{\frac{3}{2}}(1-N\thresh))^2} \)\\
	&\leq C\(\frac{ \frac{1}{ h^{11}\thresh^{\frac{7}{2}} }}{h^2\thresh^{\frac{3}{2}}}+\frac{ \frac{1}{h^3}\cdot\frac{1}{h^7\thresh^{\frac{3}{2}}} \cdot\frac{1}{\thresh^{\frac{1}{2}(1-\alpha)} }\cdot\frac{1}{h^{4\alpha}\thresh^{\frac{3\alpha}{2}}}} {h^4\thresh^3}\)=C\(\frac{1}{h^{13}\thresh^{5}}+\frac{1}{h^{14+4\alpha}\thresh^{5+4\alpha}}\)\leq \frac{C}{h^{18}\thresh^{9}}.
	\end{align*}	
	Finally, 
\begin{align*}
 \norm{D\pi}_{C^{0, \alpha} (\overline{\mathcal{K}^\thresh};\R^N)}\leq C(1+\norm{Dr}_{L^\infty(\mathcal{K}^\thresh)}+[Dr]_{C^{0, \alpha} (\overline{\mathcal{K}^\thresh})})\leq \frac{C}{h^{18}\thresh^{9}}
\end{align*}
by the calculation above combined with \eqref{eqn: Dr bound}
\end{proof}

With the above estimate, we can now prove linear convergence and locally superlinear convergence of our algorithm. This is done essentially as in \cite{KitagawaMerigotThibert19}.
\begin{proof}[Proof of Theorem \ref{thm: linear convergence}]
	Let $\psialg:=\psi_k$ be the vector chosen at the $k$th step of  Algorithm \ref{alg: damped newton}, $\diralg:=(Dw_{h, \thresh}(\psialg))^{-1}(w_{h, \thresh}(\psialg)-w)$, and define the curve $\psialg(t):=\pi(\psialg-t\diralg)$ (where $\pi$ is defined in Proposition \ref{prop: alg error bounds}). 
	We also take $\tilde{L}:= \norm{D\pi}_{C^{0, \alpha} (\overline{\mathcal{K}^\thresh};\R^N)}$, which has the bound claimed in the statement of the theorem by Proposition \ref{prop: alg error bounds}. As noted above $\psialg\in \mathcal{K}^\thresh\cap \mathcal{W}^{\epsilon_0}$, hence by Proposition \ref{prop: Dw estimates} we have the estimates \eqref{eqn: lipschitz bound w} and \eqref{eqn: inverse Dw bound}. Let $\tau_1:=\inf\{t\geq 0\mid \psialg(t)\not\in \mathcal{W}^{\frac{\epsilon_0}{2}}\}$, then $w_{h, \thresh}^j(\psialg(\tau_1))=\frac{\epsilon_0}{2}$ for some $1\leq j\leq N$, thus (using that $\psialg\in \normset{w, h, \thresh}$ so $\pi(\psialg)=\psialg$ and $\norm{\diralg}\leq \frac{\norm{w_{h, \thresh}(\psialg)-w}}{\kappa}$) we calculate
	\begin{align*}
		\frac{\epsilon_0}{2}&\leq \norm{w_{h, \thresh}(\psialg(\tau_1))-w_{h, \thresh}(\psialg)}\leq L\norm{\psialg(\tau_1)-\psialg}\\
		&=L\norm{\pi(\psialg-\tau_1\diralg)-\pi(\psialg)}\leq L\tilde{L}\tau_1\norm{\diralg}\leq \frac{L\tilde{L}\tau_1\norm{w_{h, \thresh}(\psialg)-w}}{\kappa}.
	\end{align*}
	The above gives a lower bound of $\frac{\kappa \epsilon_0}{2L\tilde{L}\norm{w(\psialg)-w}}$ on the first exit time $\tau_1$, and $w$ is uniformly $C^{1, \alpha}$ on the image $\psialg([0, \tau_1])$ while $\pi$ remains uniformly $C^{1, \alpha}$ on the segment $[\psialg, \psialg-\tau_1\diralg]$. We will now Taylor expand in $t$. Note that
	\begin{align*}
		\left. \frac{d}{dt}\right\vert_{t=0} &w_{h, \thresh}(\psialg(t))=\left.-Dw_{h, \thresh}(\psialg(t))\diralg+\inner{Dr(\psialg(t))}{\diralg}Dw_{h, \thresh}(\psialg(t))\onevect\right\vert_{t=0}\\
		&=-(w_{h, \thresh}(\psialg)-w)+\inner{Dr(\psialg)}{\diralg}Dw_{h, \thresh}(\psialg)\onevect.
	\end{align*}
	Using \eqref{eqn: grad r} and that $\psialg\in\normset{w, h, \thresh}$, we obtain
	\begin{align*}
		\inner{Dr(\psialg)}{\diralg}&=\frac{\inner{Dw_{h, \thresh}(\psialg)^T\onevect}{Dw_{h, \thresh}(\psialg)^{-1}(w_{h, \thresh}(\psialg)-w)}}{\inner{Dw_{h, \thresh}(\psialg)\onevect}{\onevect}}\\
		&=\frac{\inner{\onevect}{w_{h, \thresh}(\psialg)-w}}{\inner{Dw_{h, \thresh}(\psialg)\onevect}{\onevect}}=0.
	\end{align*}
	Now Taylor expanding we obtain
	\begin{align}\label{eqn: taylor expansion}
		w_{h, \thresh}(\psialg(t)) &= w_{h, \thresh}(\psialg(0)) + \bigg( \left. \frac{d}{du}\right\vert_{u=0} w_{h, \thresh}(\psialg(u)) \bigg)t + \int_{0}^t \(\left. \frac{d}{du}\right\vert_{u=s} w_{h, \thresh}(\psialg(u)) -  \left. \frac{d}{du}\right\vert_{u=0} w_{h, \thresh}(\psialg(u)) \) ds	\notag \\	
		&=:(1-t)w_{h, \thresh}(\psialg)+tw+R(t).
	\end{align}
	We see that
	\begin{align*}
	R^i(t) &= \int_0^t\( \inner{\nabla w^i_{h, \thresh}(\psialg(s))}{\dot{\psialg}(s)} - \inner{\nabla w^i_{h, \thresh}(\psialg(0))}{\dot{\psialg}(0)} \)ds \\
	&=\int_0^t \(\inner{\nabla w^i_{h, \thresh}(\psialg(s)) - \nabla w^i_{h, \thresh}(\psialg(0))}{\dot{\psialg}(s)} + \inner{\nabla w^i_{h, \thresh}(\psialg(0))}{\dot{\psialg}(s)- \dot{\psialg}(0)}\) ds.
	\end{align*}
	We will examine the two inner products separately. For $t\in [0, \tau_1]$ we have
	\begin{align*}
	&\int_0^t \inner{\nabla w^i_{h, \thresh}(\psialg(s)) - \nabla w^i_{h, \thresh}(\psialg(0))}{\dot{\psialg}(s)} ds 
	\leq \int_0^t \norm{\nabla w^i_{h, \thresh}(\psialg(s)) - \nabla w^i_{h, \thresh}(\psialg(0))} \norm{\dot{\psialg}(s)} ds \\
	&\leq \int_0^t ([Dw_{h, \thresh}]_{C^{0, \alpha}(\normset{w, h, \thresh})} \norm{\psialg(s) - \psialg(0)}^\alpha) (\norm{D\pi(\psialg-s\diralg)}\norm{\diralg}) ds \\
	&\leq \int_0^t ([Dw_{h, \thresh}]_{C^{0, \alpha}(\normset{w, h, \thresh})} \norm{D\pi}_{C^{0, \alpha} (\overline{\mathcal{K}^\thresh}) }^\alpha \norm{s\diralg}^{\alpha^2}) (\norm{D\pi(\psialg-s\diralg)}\norm{\diralg}) ds \\	
	&\leq \frac{L \ti L^{1+\alpha} \norm{\diralg}^{\alpha^2 + 1}}{\alpha^2 + 1 } t^{\alpha^2 + 1}
	\end{align*}
	and 
	\begin{align*}
	\int_0^t \inner{\nabla w^i_{h, \thresh}(\psialg(0))}{\dot{\psialg}(s)- \dot{\psialg}(0)} ds
	&\leq \int_0^t \norm{\nabla w^i_{h, \thresh}(\psialg(0))}\norm{\dot{\psialg}(s)- \dot{\psialg}(0)} ds \\
	&\leq \int_0^t \norm{Dw_{h, \thresh}(\psialg(0))}\norm{(D\pi(\psialg(0)) - D\pi(\psialg(s)))\diralg} ds \\	
		&\leq \int_0^t \norm{Dw_{h, \thresh}(\psialg(0))}\norm{D\pi}_{C^{0, \alpha}  (\overline{\mathcal{K}^\thresh}) }^{1+\alpha} \norm{s\diralg}^{\alpha^2} \norm{\diralg} ds \\	
	&= \frac{L \ti L^{1+\alpha} \norm{\diralg}^{1+\alpha^2}}{\alpha^2 + 1} t^{\alpha^2 + 1}
	\end{align*}
	where we have used $\dot{\psialg}(s) = -(D\pi(\psialg -s \diralg)) (\diralg)$. 
	Hence for $t\in [0, \tau_1]$ we obtain the bound on the remainder term $R$ above as
	\begin{align*}
		\norm{R(t)}
		\leq \frac{2L \ti L^{1+\alpha}\sqrt{N} \norm{\diralg}^{1+\alpha^2}}{\alpha^2 + 1} t^{\alpha^2 + 1} 
		\leq \frac{2L \ti L^{1+\alpha}\sqrt{N} \norm{w_{h, \thresh}(\psialg)-w}^{1+\alpha^2}}{\kappa^{1+\alpha^2}} t^{\alpha^2 + 1}.		
	\end{align*}
	At this point, the remainder of the proof proceeds exactly as that of \cite[Proposition 6.1]{KitagawaMerigotThibert19} following equation (6.3) there, with $w_{h, \thresh}$ replacing the map $G$ and $\alpha^2$ instead of $\alpha$. For the convenience of the reader we give the analogous expressions for $\tau_i$ which are
	\begin{align*}
	\tau_1 &\geq \frac{\kappa \eps_0}{2L\ti L \norm{w_{h, \thresh}(\psialg)-w}},\\
	\tau_2 &= \min(\tau_1, \frac{\kappa^{1+ \frac{1}{\alpha^2}} \eps_0^{\frac{1}{\alpha^2}}}{ (2 L\ti L^{1+\alpha} \sqrt{N})^{\frac{1}{\alpha^2}}   \norm{w_{h, \thresh}(\psialg)-w}^{1 + \frac{1}{\alpha^2}}   }),\\
	\tau_3 &= \min(\tau_2, \frac{\kappa^{1+ \frac{1}{\alpha^2}}}{(4L\ti L^{1+\alpha} \sqrt{N})^{\frac{1}{\alpha^2}} \norm{w_{h, \thresh}(\psialg)-w}   }  , 1).
	\end{align*}
	Finally, note that since $\sum_{i=1}^N w_{h, \thresh}(\psialg)^i = \sum_{i=1}^N w^i$, we have the bound $$\norm{w_{h, \thresh}(\psialg)-w}\leq 2\sum_{i=1}^N w^i\leq 2N.$$
	With these expressions, we can calculate
	\begin{align*}
	\conj \tau_k \leq 
	\frac{\eps_0^{\frac{1}{\alpha^2}} \kappa^{1+ \frac{1}{\alpha^2}}}{(4L\ti L^{1+\alpha} \sqrt{N})^{\frac{1}{\alpha^2}}  \norm{w_{h, \thresh}(\psialg_k)-w}^{1+\frac{1}{\alpha^2}}   }\leq \tau_3,
	\end{align*}
then global linear and local superlinear convergence follows as in \cite[Proposition 6.1]{KitagawaMerigotThibert19}.
\end{proof}

We conclude by using the above estimate Proposition \ref{prop: alg error bounds} to give a crude estimate on the number of iterations necessary to obtain an approximation of a solution to within an error of $\zeta$. Note that Corollary \ref{cor: number of iterations} is far from tight, as it does not take into account that our rate derived in Proposition \ref{prop: alg error bounds} goes to zero or that we have locally $1+\alpha^2$-superlinear convergence, but still serves as a starting point. 
\begin{cor}\label{cor: number of iterations}
	There exists a universal constant $C > 0$ so that for every $\zeta > 0$, and $\eps_0$, $h$, $\thresh$ sufficiently small depending on universal quantities, Algorithm \ref{alg: damped newton}, terminates in at most $\frac{\log \frac{\zeta}{2N}}{\log(1- \eta)}$ steps where $\eta =C{\eps_0^{1+ \frac{2}{\alpha^2}} h^{6 + \frac{18}{\alpha} + \frac{27}{\alpha^2}} \thresh^{\frac{3}{2} + \frac{9}{\alpha} + \frac{25}{2\alpha^2}}}$. 
\end{cor}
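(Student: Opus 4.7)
The plan is to iterate the contraction estimate from Theorem \ref{thm: linear convergence} after extracting a uniform-in-$k$ lower bound on $\conj\tau_k$ that depends only on the parameters $\epsilon_0$, $h$, $\thresh$ (and universal quantities).

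The first step is to fix the \emph{a priori} bound $\norm{w_{h, \thresh}(\psi_k) - w} \leq 2N$, valid at every iteration. This is the same bound already used at the very end of the proof of Theorem \ref{thm: linear convergence}: since $\psi_k \in \normset{w, h, \thresh}$ forces $\sum_{i} w_{h, \thresh}^i(\psi_k) = \sum_{i} w^i \leq N$, and both vectors have nonnegative entries, the triangle inequality gives the bound.

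The second step is to substitute the explicit parameter dependencies into the formula defining $\conj\tau_k$. From Proposition \ref{prop: Dw estimates} I would use $\kappa \geq C^{-1}\epsilon_0 h^{6}\thresh^{3/2}$ together with the uniform estimate $L \leq C\max(h^{-2}\thresh^{-2},\, h^{-3}\thresh^{-1/2}) \leq C h^{-3}\thresh^{-2}$ for $h,\thresh \leq 1$; from Proposition \ref{prop: alg error bounds} I would use $\tilde L \leq C h^{-18}\thresh^{-9}$. Plugging these along with $\norm{w_{h, \thresh}(\psi_k)-w}\leq 2N$ into the first argument of the $\min$ defining $\conj\tau_k$, and requiring $\epsilon_0$, $h$, $\thresh$ small enough that the resulting expression is at most $1$, produces a lower bound $\conj\tau_k/2 \geq \eta$ of exactly the stated form. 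The only genuine piece of work here is exponent arithmetic: the $h$-exponent is $6(1 + 1/\alpha^2)$ contributed by $\kappa^{1+1/\alpha^2}$ plus $(3 + 18(1+\alpha))/\alpha^2$ from $(L \tilde L^{1+\alpha})^{1/\alpha^2}$, totalling $6 + 18/\alpha + 27/\alpha^2$; the $\thresh$-exponent is $(3/2)(1+1/\alpha^2) + (2 + 9(1+\alpha))/\alpha^2 = 3/2 + 9/\alpha + 25/(2\alpha^2)$; and the $\epsilon_0$-exponent is $1/\alpha^2 + (1+1/\alpha^2) = 1 + 2/\alpha^2$. The factor of $1/2$ and all $N$ powers are absorbed into the universal constant $C$.

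The final step is iteration. Theorem \ref{thm: linear convergence} combined with the uniform bound $\conj\tau_k/2 \geq \eta$ gives $\norm{w_{h,\thresh}(\psi_{k+1})-w} \leq (1-\eta) \norm{w_{h,\thresh}(\psi_k)-w}$ at every step, whence by induction $\norm{w_{h,\thresh}(\psi_k)-w} \leq 2N(1-\eta)^k$. The algorithm's while-loop therefore exits as soon as $2N(1-\eta)^k \leq \zeta$; since for $\zeta < 2N$ and $\eta \in (0,1)$ both logarithms are negative, this rearranges to $k \geq \log(\zeta/(2N))/\log(1-\eta)$, yielding the claimed bound on the number of iterations. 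The main (and essentially only) obstacle is the exponent bookkeeping in the second step; conceptually the argument is a textbook iteration of geometric contraction, and no new analytic input beyond what is already proved in the paper is required.
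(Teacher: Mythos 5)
Your proposal follows the paper's proof essentially verbatim: both use the a priori bound $\norm{w_{h,\thresh}(\psi_k)-w}\leq 2N$, substitute the explicit parameter dependencies of $\kappa$, $L$ from Proposition \ref{prop: Dw estimates} and of $\ti L$ from Proposition \ref{prop: alg error bounds} into the formula for $\conj\tau_k$, carry out the same exponent arithmetic to obtain $\eta$, and then iterate the geometric contraction from Theorem \ref{thm: linear convergence}. The exponent bookkeeping checks out, and the handling of the $\min(\cdot,1)$ and the absorption of the $1/2$ into the universal constant match the paper's treatment.
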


\begin{proof}
If $\conj\tau_k\neq 1$, we have
	\begin{align*}
	\conj \tau_k 
	&= 		\frac{\eps_0^{\frac{1}{\alpha^2}} \kappa^{1+ \frac{1}{\alpha^2}}}{(4L\ti L^{1+\alpha} \sqrt{N})^{\frac{1}{\alpha^2}}  \norm{w_{h, \thresh}(\psialg_k)-w}^{1+\frac{1}{\alpha^2}}   }
	\geq C	\frac{\eps_0^{\frac{1}{\alpha^2}} (\epsilon_0 h^{6}\thresh^{\frac{3}{2}})^{1+ \frac{1}{\alpha^2}}}{({(h^{-18}\thresh^{-9})^{1+\alpha}}\max(h^{-2}\thresh^{-2}, h^{-3}\thresh^{-\frac{1}{2}}))^{\frac{1}{\alpha^2}}}  \\
	&\geq C	\frac{\eps_0^{\frac{1}{\alpha^2}} (\epsilon_0 h^{6}\thresh^{\frac{3}{2}})^{1+ \frac{1}{\alpha^2}}}{({(h^{-18}\thresh^{-9})^{1+\alpha}} (h^{-3}\thresh^{-2}))^{\frac{1}{\alpha^2}}}  
	= C {\eps_0^{1+ \frac{2}{\alpha^2}} h^{6 + \frac{18}{\alpha} + \frac{27}{\alpha^2}} \thresh^{\frac{3}{2} + \frac{9}{\alpha} + \frac{25}{2\alpha^2}}},
	\end{align*}
and we may assume $h$, $\eps_0$, $\thresh$ are sufficiently small so that $1-\frac{C {\eps_0^{1+ \frac{2}{\alpha^2}} h^{6 + \frac{18}{\alpha} + \frac{27}{\alpha^2}} \thresh^{\frac{3}{2} + \frac{9}{\alpha} + \frac{25}{2\alpha^2}}}}{2}\geq \frac{1}{2}$. 
	Hence regardless of which value $\conj\tau_k$ takes at each iteration, after $\ell$ iterations we have
	\begin{align*}
	\norm{w(\psi_\ell) - w} \leq (1- \eta)^\ell \norm{w(\psi_0) - w} \leq 2N (1- \eta)^\ell
	\end{align*}
	where $\eta =  C{\eps_0^{1+ \frac{2}{\alpha^2}} h^{6 + \frac{18}{\alpha} + \frac{27}{\alpha^2}} \thresh^{\frac{3}{2} + \frac{9}{\alpha} + \frac{25}{2\alpha^2}}} $. Solving $(1- \eta)^\ell \norm{w(\psi_0) - w} \leq 2N (1- \eta)^\ell \leq \zeta$ for $\ell$, we see that it suffices to take
	$\ell \geq \frac{\log \frac{\zeta}{2N}}{\log(1- \eta)}$.
\end{proof}

\section{Stability of Laguerre Cells} \label{section: Stability of Laguerre Cells}

In this section we prove that the convergence in our algorithm can be seen in terms of the Laguerre cells themselves instead of just in terms of the $w(\psi_k)$. 

\subsection{Proof of Theorem \ref{thm: symmetric convergence}}
We first prove $\mu$-symmetric convergence of Laguerre cells.

\begin{proof}[Proof of Theorem \ref{thm: symmetric convergence}]
	Let ${w}\in \R^N$ with $\sum_{i=1}^N {w}^i\geq 1$, ${w}^i\geq 0$ and $\psi_{h, \thresh}\in \mathcal{K}^\thresh$, and let $(T, \weightvect)$ be a pair minimizing \eqref{eqn: monge ver} with the storage fee function $F_{w}$. 
	Then if we define $\weightvect_{h, \thresh} := G(\psi_{h, \thresh})$ and $\conj{w}:=w_{h, \thresh}(\psi_{h, \thresh})$, by Proposition \ref{prop: constructing solutions}, the pair $(T_{\psi_{h, \thresh}}, \weightvect_{h, \thresh})$ minimizes \eqref{eqn: monge ver} with storage fee equal to $F_{\conj w, h, \thresh}$. By \cite[Theorem 4.7]{BansilKitagawa19a}, there also exists a pair $(T_{\conj{w}, \thresh}, \weightvect_{\conj{w}, \thresh})$ which minimizes \eqref{eqn: monge ver} with storage fee $F_{\conj{w}, 0, \thresh}$.
	Let
	\begin{align*}
	\mathcal{C}(\ti\weightvect) = \min_{S_\#\mu =\nu_{\ti \weightvect}} \int c(x, S(x)) d\mu = \sup_{\psi\in \R^N} \(-\int \psi^{c^*} d\mu - \inner{\psi}{\ti \weightvect}\).
	\end{align*}
Since
	$\mathcal{C}( \weightvect_{h, \thresh}) + F_{\conj{w}, h, \thresh}( \weightvect_{h, \thresh}) = \min_{\ti \weightvect\in\weightvectset} \(\mathcal{C}(\ti{\weightvect}) + F_{\conj{w}, h, \thresh}(\ti\weightvect)\) \leq \mathcal{C}( {\weightvect_{\conj{w}, \thresh}}) + F_{\conj{w}, h, \thresh}( {\weightvect_{\conj{w}, \thresh}})$,
we have 
	\begin{align*}\mathcal{C}(\weightvect_{h, \thresh}) - \mathcal{C}( {\weightvect}_{\conj{w}, \thresh}) \leq F_{\conj{w}, h, \thresh}( {\weightvect}_{\conj{w}, \thresh}) - F_{\conj{w}, h, \thresh}(\weightvect_{h, \thresh}) \leq -F_{\conj{w}, h, \thresh}( {\weightvect}_{h, \thresh}) \leq h. \end{align*}
	Next by Corollary \ref{unicon} from the appendix, we have $\frac{1}{32C_LN} \norm{\weightvect_{h, \thresh} - {\weightvect}_{\conj{w},  \thresh}}^2 \leq \mathcal{C}(\weightvect_{h, \thresh}) - \mathcal{C}({\weightvect}_{\conj{w}, \thresh}) \leq h$ as ${\weightvect}_{\conj{w}, \thresh}$ is the minimizer of $\mathcal{C}$ on the convex set $\prod_{i=1}^N [\thresh, \conj{w}^i + \thresh]$, which can be seen from $F_{\conj{w}, 0, \thresh}=\ind(\cdot \ \vert \prod_{i=1}^N [\thresh, \conj{w}^i + \thresh])$. 
	
	Since the $l^1$ and $l^2$ norms on $\R^N$ are comparable,
	\begin{align*}
	\norm{\weightvect_{h, \thresh} - {\weightvect}_{\conj{w}, \thresh}}_1 \leq \sqrt{N} \norm{\weightvect_{h, \thresh} - {\weightvect}_{\conj{w}, \thresh}} \leq 4N\sqrt{2C_Lh}.
	\end{align*}
	Since $\sum_i w^i=1$, we see $(T, \weightvect)$ minimizes \eqref{eqn: monge ver} with storage fee $\ind(\cdot \ \vert \prod_{i=1}^N [0, {w}^i])$, hence by \cite[Theorem 2.6]{BansilKitagawa20a}, we obtain $\norm{ {\weightvect}_{\conj{w}, \thresh}-\weightvect }_1 \leq 2N\thresh + 2 \norm{\conj w -  w}_1$. By he triangle inequality,
	\begin{align*}
	\norm{G(\psi_{h, \thresh})-\weightvect}_1=\norm{\weightvect_{h, \thresh} - \weightvect}_1 \leq 2(N\thresh + \norm{\conj w - w}_1 + 2N\sqrt{2C_Lh}),
	\end{align*}
	proving \eqref{eqn: G bound}, and then \cite[Corollary 2.7]{BansilKitagawa20a} gives
	\begin{align*}
	\sum_{i=1}^N \Delta_\mu({\Lag_{i}(\psi_{h, \thresh})}, T^{-1}(\{y_i\})) \leq 8N(N\thresh +  \norm{\conj w - w}_1 + 2N\sqrt{2C_Lh})
	\end{align*}
	proving \eqref{eqn: mu diff bound}.
\end{proof}

\subsection{Proof of Theorem \ref{thm: hausdorff convergence}}
Next we prove convergence in terms of Hausdorff distance.

\begin{proof}[Proof of Theorem \ref{thm: hausdorff convergence}]
	We begin with statement (1). By \cite[Proposition 3.5, Proposition 4.4, Corollary 4.5]{BansilKitagawa19a}, there exists some $\psi\in \R^N$ such that $T=T_\psi$ $\mu$-a.e. and $\weightvect=G(\psi)$. Under the hypotheses of (1), by Theorem \ref{thm: symmetric convergence} \eqref{eqn: G bound}, we see that $\norm{G(\psi_k)-\weightvect}\to 0$ as $k\to\infty$. Then since minimizers of \eqref{eqn: monge ver} are minimizers of a classical optimal transport problem once the weight $\weightvect$ is known, we can apply \cite[Theorem 1.10]{BansilKitagawa20a} which gives the claim in (1). Claim (2) also follows immediately from \cite[Theorem 1.10]{BansilKitagawa20a}, since we know $\norm{G(\psi_{h, \thresh})-\weightvect}_1 \leq 2(N\thresh + \norm{\conj w - w}_1 + 2N\sqrt{2C_Lh})$ by Theorem \ref{thm: symmetric convergence} \eqref{eqn: G bound}.
\end{proof}

\section{Numerical examples}\label{section: numerical examples}
In this section we present some numerical examples produced by an implementation of  Algorithm \ref{alg: damped newton}. In each example, the source measure $\mu$ is supported on the 2D square $[0, 3]^2$, and the finite set $Y$ is a $30\times 30$ uniform grid of points with a random perturbation added, contained in the square $[0, 1]^2$. Each example was calculated to an error of $10^{-10}$, with parameters $h=\frac{1}{2}$ and $\epsilon=10^{-6}$; each figure below shows the boundaries of the associated Laguerre cells after various numbers of iterations. The code is based on a modification of the PyMongeAmpere interface developed by Quentin M\'erigot\footnote{M\'erigot's original code available at \url{https://github.com/mrgt/PyMongeAmpere}}. 

\begin{ex}\label{ex: KMT comparison}
In this example, the source measure $\mu$ has density identically zero on the square $[1, 2]^2$, identically equals a positive constant on the boundary of $[0, 3]^2$, and is linearly interpolated over a triangulation of $[0, 3]^2$ using $18$ triangles (see figure in \cite[Section 6.3]{KitagawaMerigotThibert19} for the triangulation, this measure is the same as what appears in that section), then normalized to unit mass. By a small modification of \cite[Appendix A]{KitagawaMerigotThibert19}, this $\mu$ satisfies a Poincar{\'e}-Wirtinger inequality. The weights $w$ are randomly generated, and taken to sum to one, so this example is a classical optimal transport problem. 

Algorithm \ref{alg: damped newton} reaches the specified error in $74$ iterations, while the algorithm of \cite{KitagawaMerigotThibert19} takes $62$ iterations, hence the two have comparable performance for classical optimal transport with source satisfying a Poincar{\'e}-Wirtinger inequality. The final diagram of Laguerre cells for both algorithms is presented in Figure \ref{fig: KMT comp} below (seeded with the same random values).
 \begin{figure}[H]
\begin{mdframed}
\centering
\begin{subfigure}{.33 \textwidth}
    \centering
    \frame{
    \includegraphics[width=0.7\textwidth]{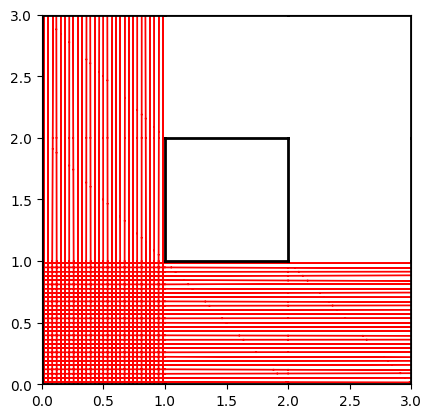}}
    \caption[short]{Algorithm \ref{alg: damped newton}: Iter$=0$}
\end{subfigure}%
\begin{subfigure}{.33\textwidth}
    \centering
    \frame{
    \includegraphics[width=0.7\textwidth]{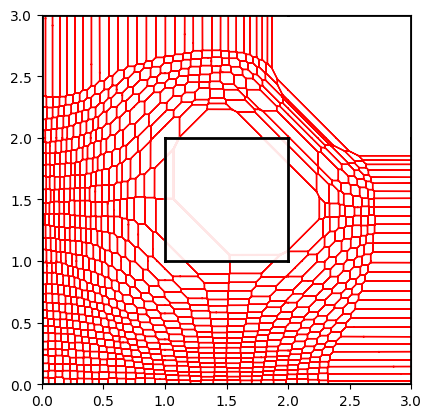}}
    \caption[short]{Algorithm \ref{alg: damped newton}: Iter$=50$}
\end{subfigure}%
\begin{subfigure}{.33\textwidth}
    \centering
    \frame{
    \includegraphics[width=0.7\textwidth]{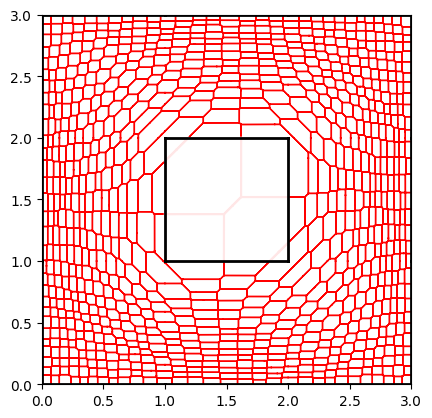}}
    \caption[short]{Algorithm \ref{alg: damped newton}: Iter$=74$}
\end{subfigure}
\raggedleft
\begin{subfigure}{.33 \textwidth}
    \centering
    \frame{
    \includegraphics[width=0.7\textwidth]{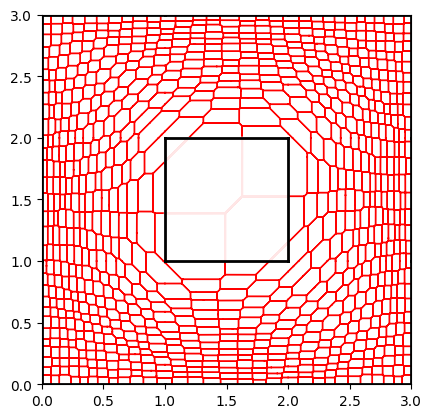}}
    \caption[short]{\cite{KitagawaMerigotThibert19}: Iter$=62$}
\end{subfigure}
\end{mdframed}
\caption{Laguerre cells of Example \ref{ex: KMT comparison}}\label{fig: KMT comp}
\end{figure}
\end{ex}

\begin{ex}\label{ex: random storage fee}
 In this example the source measure $\mu$ is the same as Example \ref{ex: KMT comparison}, and the vector $w$ associated to the storage fee is randomly generated. Since $\sum_{i=1}^Nw^i>1$ this is \emph{not} a classical optimal transport problem, but is an optimal transport problem with storage fee.
 
 Algorithm \ref{alg: damped newton} reaches the specified error tolerance in $57$ iterations. Attempting to run the algorithm from \cite{KitagawaMerigotThibert19} with a target measure given by the weights $w$ fails to reduce the error beyond $2\cdot 10^{-2}$ and produces dual vectors leading to clearly incorrect Laguerre cells. This is to be expected, as this example is not a classical optimal transport problem.
   \begin{figure}[H]
\begin{mdframed}
\centering
\begin{subfigure}{.33 \textwidth}
    \centering
    \frame{
    \includegraphics[width=0.7\textwidth]{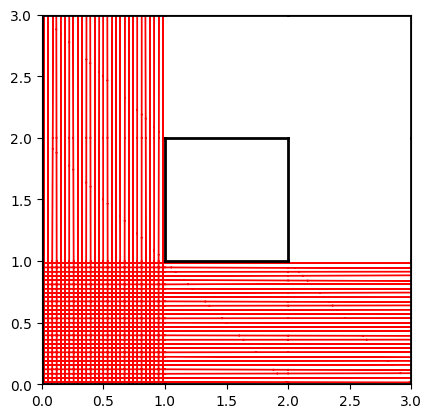}}
    \caption[short]{Algorithm \ref{alg: damped newton}: Iter$=0$}
\end{subfigure}%
\begin{subfigure}{.33\textwidth}
    \centering
    \frame{
    \includegraphics[width=0.7\textwidth]{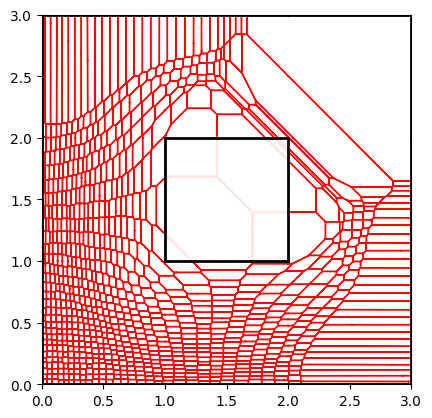}}
    \caption[short]{Algorithm \ref{alg: damped newton}: Iter$=25$}
\end{subfigure}%
\begin{subfigure}{.33\textwidth}
    \centering
    \frame{
    \includegraphics[width=0.7\textwidth]{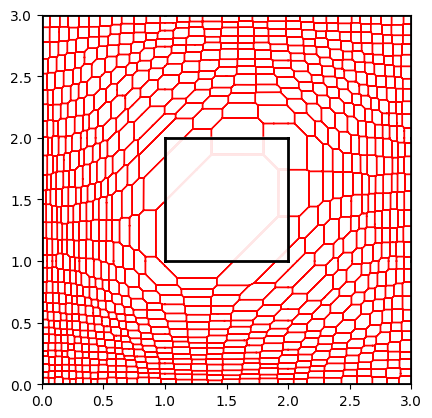}}
    \caption[short]{Algorithm \ref{alg: damped newton}: Iter$=57$}
\end{subfigure}
\end{mdframed}
\caption{Laguerre cells of Example \ref{ex: random storage fee}}\label{fig: storage}
\end{figure}

\end{ex}

\begin{ex}\label{ex: non-PW}
In this final example, the source measure $\mu$ is taken to have density identically zero on the strip $[1, 2]\times [0, 3]$, equal to a positive constant on the edges $\{0, 1\}\times [0, 3]$, and then is linearly interpolated over the same triangulation as in Example \ref{ex: KMT comparison} (and again normalized to unit mass). In particular, as $\spt\mu$ is not connected, this measure does \emph{not} satisfy a $(q, 1)$-Poincar{\'e}-Wirtinger inequality for any $q\geq 1$. The weights $w$ are taken with random weights summing to one, hence this corresponds to a classical optimal transport problem.

Algorithm \ref{alg: damped newton} reaches the error tolerance in $123$ iterations, while the algorithm from \cite{KitagawaMerigotThibert19} fails to produce any reduction of error from the initial state. This is due to the lack of a Poincar{\'e}-Wirtinger inequality for $\mu$.
   \begin{figure}[H]
\begin{mdframed}
\centering
\begin{subfigure}{.33 \textwidth}
    \centering
    \frame{
    \includegraphics[width=0.7\textwidth]{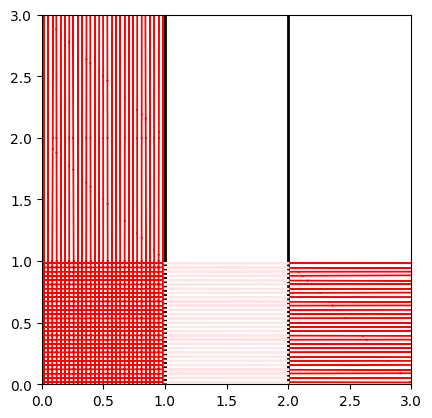}}
    \caption[short]{Algorithm \ref{alg: damped newton}: Iter$=0$}
\end{subfigure}%
\begin{subfigure}{.33\textwidth}
    \centering
    \frame{
    \includegraphics[width=0.7\textwidth]{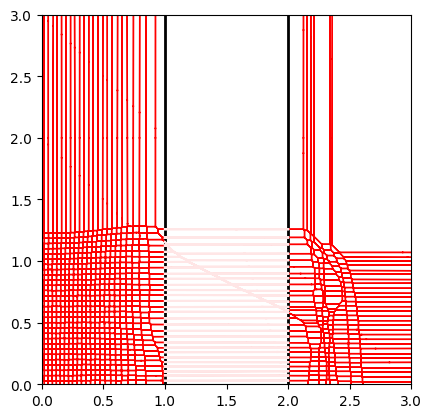}}
    \caption[short]{Algorithm \ref{alg: damped newton}: Iter$=50$}
\end{subfigure}%
\begin{subfigure}{.33\textwidth}
    \centering
    \frame{
    \includegraphics[width=0.7\textwidth]{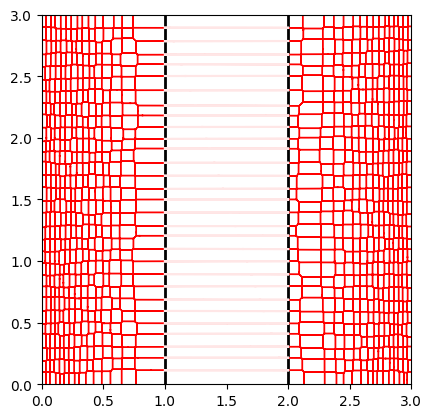}}
    \caption[short]{Algorithm \ref{alg: damped newton}: Iter$=123$}
\end{subfigure}
\end{mdframed}
\caption{Laguerre cells of Example \ref{ex: non-PW}}\label{fig: non-PW}
\end{figure}
\end{ex}
\begin{appendices}
	\section {Strong Convexity of $\mathcal{C}$}\label{appendix: strong convexity}

	\begin{lem}\label{lem: strong convex C}
		
		$\mathcal{C}$ is strongly convex. In particular
		\begin{align*}
		t\mathcal{C}(x) + (1-t)\mathcal{C}(y)  \geq  \mathcal{C}(tx + (1-t)y) + \frac{1}{8C_LN}t(1-t) \norm{y-x}^2,
		\end{align*}
		where $[G]_{C^{0, 1}(\R^N)}\leq C_LN$, and $C_L>0$ is universal.
	\end{lem}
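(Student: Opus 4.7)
The plan is to derive the strong convexity of $\mathcal{C}$ from the Lipschitz regularity of $G$ via a direct Legendre--Fenchel duality argument, essentially executing the standard proof that the convex conjugate of a convex $L$-smooth function is $\frac{1}{L}$-strongly convex.

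First I would identify the dual picture: the given representation
\begin{align*}
 \mathcal{C}(\ti\weightvect) = \sup_{\psi\in \R^N}\bigl(-\inner{\psi}{\ti\weightvect} - K(\psi)\bigr), \qquad K(\psi) := \int_X \psi^{c^*}\, d\mu,
\end{align*}
exhibits $\mathcal{C}$ as (essentially) the Legendre conjugate of the functional $K$. A standard semi-discrete computation (writing $\psi^{c^*}(x)=\max_i(-c(x,y_i)-\psi^i)$ and using that the Laguerre cells overlap only on a $\mu$-null set by \eqref{Twist}) shows $K$ is convex on $\R^N$ and differentiable with $\nabla K(\psi)=-G(\psi)$. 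The hypothesis $[G]_{C^{0,1}(\R^N)}\leq C_L N$ then says $\nabla K$ is $C_L N$-Lipschitz, so $K$ satisfies the classical descent inequality
\begin{align*}
 K(\psi_2)\leq K(\psi_1)+\inner{\nabla K(\psi_1)}{\psi_2-\psi_1}+\frac{C_L N}{2}\norm{\psi_2-\psi_1}^2.
\end{align*}

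Next, fix $x, y\in\weightvectset$ with $\mathcal{C}(x),\mathcal{C}(y)<\infty$ and $t\in[0,1]$, and set $z:=tx+(1-t)y$. Invoking \cite[Proposition 3.5, Theorem 4.7]{BansilKitagawa19a} to produce a dual maximizer $\psi_z\in\R^N$ with $G(\psi_z)=z$ and $\mathcal{C}(z)=-\inner{\psi_z}{z}-K(\psi_z)$, I would then test the definition of $\mathcal{C}(x)$ against the shifted vector $\psi_z-s(x-z)$: the variational bound $\mathcal{C}(x)\geq -\inner{\psi_z-s(x-z)}{x}-K(\psi_z-s(x-z))$, combined with the descent inequality applied to $K(\psi_z-s(x-z))$ and the identity $\nabla K(\psi_z)=-z$, yields after simplification
\begin{align*}
 \mathcal{C}(x)\geq \mathcal{C}(z)+\inner{-\psi_z}{x-z}+s\norm{x-z}^2-\frac{C_L N\, s^2}{2}\norm{x-z}^2.
\end{align*}
Choosing $s=1/(C_L N)$ maximizes the quadratic remainder, giving $\mathcal{C}(x)\geq \mathcal{C}(z)+\inner{-\psi_z}{x-z}+\frac{1}{2C_L N}\norm{x-z}^2$, and similarly for $\mathcal{C}(y)$.

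Finally, taking the convex combination with weights $t, 1-t$, the linear terms $\inner{-\psi_z}{\cdot}$ cancel since $t(x-z)+(1-t)(y-z)=0$, and substituting $\norm{x-z}=(1-t)\norm{x-y}$ and $\norm{y-z}=t\norm{x-y}$ collapses $t\norm{x-z}^2+(1-t)\norm{y-z}^2$ to $t(1-t)\norm{x-y}^2$, yielding
\begin{align*}
 t\mathcal{C}(x)+(1-t)\mathcal{C}(y)\geq \mathcal{C}(z)+\frac{1}{2C_L N}\,t(1-t)\norm{x-y}^2,
\end{align*}
which is stronger than the stated bound (since $\frac{1}{2C_L N}\geq\frac{1}{8C_L N}$). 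The main obstacle I anticipate is producing a genuine dual optimizer $\psi_z$ with $G(\psi_z)=z$ when $z$ lies on the boundary of $\weightvectset$ (where some Laguerre cell may collapse); this is routine for $z$ in the relative interior, and on the boundary one can either argue by an approximation--limit procedure using lower semicontinuity of $\mathcal{C}$, or invoke the characterization of primal/dual optimality in \cite[Theorem 4.7]{BansilKitagawa19a} directly. All other steps are routine manipulations with the descent lemma.
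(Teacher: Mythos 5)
Your argument is correct and reaches the stated inequality (in fact with the sharper constant $\frac{1}{2C_L N}$ in place of $\frac{1}{8C_L N}$), but it takes a genuinely different route from the paper's. Both proofs start from the same dual picture $\mathcal{C}(\weightvect)=B^*(-\weightvect)$ with $B(\psi)=\int\psi^{c^*}d\mu$, $\nabla B=-G$, and the Lipschitz bound on $G$. The paper then verifies directly, via a Taylor-expansion-with-remainder computation, that $B$ is $\sigma$-smooth in the sense of Az\'e--Penot with $\sigma(x)=2C_L N x^2$, and invokes \cite[Proposition 2.6]{AzePenot95}: the conjugate of a $\sigma$-smooth convex function is $\sigma^*$-strongly convex. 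You instead execute the smoothness-to-strong-convexity duality by hand: fix $z=tx+(1-t)y$, pick a dual maximizer $\psi_z$ with $\nabla B(\psi_z)=-z$, test the supremum defining $\mathcal{C}(x)$ and $\mathcal{C}(y)$ at the shifted points $\psi_z - s(\cdot-z)$, apply the descent lemma to $B$, and optimize over $s$; the linear terms cancel in the $t,(1-t)$ convex combination. The tradeoff is precisely the obstacle you flag: your route needs attainment of the dual supremum and the first-order equality $G(\psi_z)=z$, which is delicate when $z$ is on the boundary of $\weightvectset$ (some cell may degenerate, leaving only a subgradient inclusion). This is repairable by approximation from the relative interior together with continuity of $\mathcal{C}$ on the closed simplex (a bounded convex function on a compact polytope), but the paper's Az\'e--Penot route sidesteps dual attainment entirely, needing only $C^{1,1}$ regularity of $B$ --- at the cost of a constant worse by a factor of $4$ and an external reference. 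Your argument is more self-contained and sharper; the paper's is shorter and avoids the attainment issue.
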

	
	\begin{proof}
		
		Let
		$B(\psi) = \int \psi^{c^*} d\mu$.
		We see that $\mathcal{C}(\weightvect) = B^*(-\weightvect)$; also by \cite{AbedinGutierrez17} $B$ is $C^{1,1}$, $\nabla B=-G$, and $B$ is convex (see \cite[Theorem 1.1]{KitagawaMerigotThibert19}). By \cite[Theorem 5.1]{AbedinGutierrez17} we see the Lipschitz constant of $G$ is bounded from above by $C_LN$ where $C_L>0$ is some universal constant. Now
		\begin{align*}
		0 &\leq tB(x) + (1-t)B(y) - B(tx + (1-t)y) \\
		&= tB(x) + (1-t)\bigg(B(x) + \inner{y-x}{\nabla B(x)} + \int_0^1 \inner{\nabla B((1-s)x+sy)-\nabla B(x)}{y-x} ds\bigg) \\
		&- \bigg(B(x) + \inner{tx+(1-t)y - x}{\nabla B(x)} \\
		&\quad+(1-t) \int_0^{1} \inner{\nabla B((1-s(1-t))x+s(1-t)y)-\nabla B(x)}{y-x} ds\bigg) \\
		&\leq(1-t)\int_0^1 \norm{\nabla B((1-s)x+sy)-\nabla B(x)}\norm{y-x} ds \\
		&+(1-t) \int_0^{1} \norm{\nabla B((1-s(1-t))x+s(1-t)y)-\nabla B(x)}\norm{y-x} ds\\
		&\leq C_LN(1-t)\(\int_0^1 s\norm{y-x}^2 ds+(1-t) \int_0^{1} s\norm{y-x}^2 ds\)\\
		&\leq (1-t) {C_LN} \norm{y-x}^2 .
		\end{align*}
		By repeating a similar argument we get $tB(x) + (1-t)B(y) - B(tx + (1-t)y) \leq t {C_LN} \norm{y-x}^2$. Hence $tB(x) + (1-t)B(y) - B(tx + (1-t)y) \leq 2C_LN t(1-t) \norm{y-x}^2$.
		
		In the terminology of \cite[Definition 1]{AzePenot95}, we have shown that $B$ is $\sigma$-smooth where $\sigma(x) := 2C_LNx^2$. 
		Since it is well-known that $\sigma^*(z) = \frac{1}{8C_LN}{z}^2$,
		by \cite[Proposition 2.6]{AzePenot95} we see that $\mathcal{C}$ is $\sigma^*$- convex, i.e.
		$
		t\mathcal{C}(x) + (1-t)\mathcal{C}(y)  \geq  \mathcal{C}(tx + (1-t)y) + \frac{1}{8C_LN}t(1-t) \norm{y-x}^2,
		$
		finishing the proof.	
	\end{proof}
	
	\begin{cor} \label{unicon}
		
		Let $K$ be a convex subset of the domain of $\mathcal{C}$. Let $\weightvect_{min}$ be the minimizer of $\mathcal{C}$ on $K$ and $\weightvect \in K$ be arbitrary. Then
		$
		\mathcal{C}(\weightvect) - \mathcal{C}(\weightvect_{min}) \geq \frac{1}{32C_LN} \norm{\weightvect-\weightvect_{min}}^2.
		$
	\end{cor}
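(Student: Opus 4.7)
The plan is to deduce the corollary from the strong convexity estimate in Lemma \ref{lem: strong convex C} via a standard midpoint-type argument, using the optimality of $\weightvect_{min}$ on the convex set $K$.

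First I would apply the strong convexity inequality with $x = \weightvect_{min}$ and $y = \weightvect$: for any $t \in (0,1)$,
\begin{align*}
t\mathcal{C}(\weightvect_{min}) + (1-t)\mathcal{C}(\weightvect) \geq \mathcal{C}(t\weightvect_{min} + (1-t)\weightvect) + \frac{1}{8C_LN}t(1-t)\norm{\weightvect - \weightvect_{min}}^2.
\end{align*}
Since $K$ is convex, the convex combination $t\weightvect_{min} + (1-t)\weightvect$ lies in $K$, and the minimizing property of $\weightvect_{min}$ on $K$ gives $\mathcal{C}(t\weightvect_{min} + (1-t)\weightvect) \geq \mathcal{C}(\weightvect_{min})$. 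Substituting this lower bound and rearranging yields
\begin{align*}
(1-t)\bigl(\mathcal{C}(\weightvect) - \mathcal{C}(\weightvect_{min})\bigr) \geq \frac{1}{8C_LN}t(1-t)\norm{\weightvect - \weightvect_{min}}^2.
\end{align*}

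Next I would divide through by $(1-t) > 0$ and let $t \to 1^{-}$ to obtain
\begin{align*}
\mathcal{C}(\weightvect) - \mathcal{C}(\weightvect_{min}) \geq \frac{1}{8C_LN}\norm{\weightvect - \weightvect_{min}}^2,
\end{align*}
which is in fact stronger than the stated bound $\frac{1}{32C_LN}\norm{\weightvect-\weightvect_{min}}^2$ (alternatively one may just take $t = 1/2$, losing a factor of $4$, to directly recover the claim as written).

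There is no serious obstacle here: the only point that requires care is making sure $t\weightvect_{min} + (1-t)\weightvect$ genuinely lies in $K$ so that minimality can be invoked, but this is immediate from convexity of $K$. The entire argument is a one-paragraph consequence of Lemma \ref{lem: strong convex C} together with first-order optimality replaced by its weaker convex-combination form (which suffices since $\weightvect_{min}$ need not lie in the interior of $K$).
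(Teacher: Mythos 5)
Your argument is correct and is essentially the paper's argument: both rely on applying the strong convexity estimate of Lemma \ref{lem: strong convex C} to the pair $\weightvect_{min}, \weightvect$ and then invoking minimality of $\weightvect_{min}$ at the convex combination, with the paper simply fixing $t=\tfrac12$ throughout. Your version, which divides by $(1-t)$ and lets $t\to 1^-$, actually yields the sharper constant $\frac{1}{8C_LN}$ (and at $t=\tfrac12$ it gives $\frac{1}{16C_LN}$, a loss of a factor of $2$ rather than $4$ from your sharp bound, but still stronger than the claimed $\frac{1}{32C_LN}$); the paper loses an extra factor of $2$ because it does not cancel the $(1-t)$ on the left-hand side before applying minimality.
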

	
	\begin{proof}
		
		By choice of $\weightvect_{min}$, we have $\frac 12 \mathcal{C}(\weightvect) \geq \frac 12 \mathcal{C}(\weightvect_{min}) $ and $- \mathcal{C}(\weightvect_{min}) \geq - \mathcal{C}(\frac{1}{2}(\weightvect + \weightvect_{min}))$. Hence by the above lemma we have
		$
		\mathcal{C}(\weightvect) - \mathcal{C}(\weightvect_{min}) \geq \frac{1}{2}(\mathcal{C}(\weightvect) + \mathcal{C}(\weightvect_{min})) - \mathcal{C}(\frac{1}{2}(\weightvect + \weightvect_{min})) \geq \frac{1}{32C_LN} \norm{\weightvect-\weightvect_{min}}^2.
		$
	\end{proof}

\end{appendices}

\bibliographystyle{alpha}
\bibliography{snowshovelingalg}

\end{document}